\documentclass[12pt]{article}
\usepackage[top=1in,bottom=1in,left=1in,right=1in]{geometry}
\usepackage{indentfirst}
\usepackage{amsfonts,amsmath,amsthm,amssymb}
\usepackage{longtable,enumerate,xcolor,seqsplit}
\usepackage{authblk}
\usepackage[pagebackref,colorlinks=true,citecolor=blue,linkcolor=red,urlcolor=blue]{hyperref}

\newtheorem{theorem}{Theorem}[section] 
\newtheorem{definition}[theorem]{Definition}

\newtheorem{lemma}[theorem]{Lemma}

\newtheorem{corollary}[theorem]{Corollary} 
\newtheorem{remark}[theorem]{Remark}

\newtheorem{question}{Question}

\theoremstyle{plain}

\renewcommand{\leq}{\leqslant}
\renewcommand{\geq}{\geqslant}

\renewcommand{\le}{\leqslant}
\renewcommand{\ge}{\geqslant}

\newcommand{\PSL}{{\rm PSL}}
\newcommand{\PGL}{{\rm PGL}}

\newcommand{\PGU}{{\rm PGU}}
\newcommand{\PSU}{{\rm PSU}}
\newcommand{\PSp}{{\rm PSp}}
\newcommand{\PGO}{{\rm PGO}}
\newcommand{\POm}{{\rm P}\Omega}

\DeclareMathOperator{\Out}{Out}

\newcommand\Om{\Omega}

\newcommand\Alt{{\rm Alt}}

\newcommand\Sym{{\rm Sym}}

\newcommand\Omk{{(k),\Om}}


\bibliographystyle{plain}

\begin{document}

\title{Total closure for permutation actions of\\finite nonabelian simple groups}

\author[1]{Saul D. Freedman}
\author[2]{Michael Giudici}
\author[2]{Cheryl E. Praeger}

\affil[1]{School of Mathematics and Statistics, University of St Andrews}
\affil[2]{Centre for the Mathematics of Symmetry and Computation, Department of Mathematics and Statistics, The University of Western Australia}

\maketitle
\thanks{\textit{Keywords}: k-closed permutation groups; primitive groups; base size; simple groups\\\\
\indent 2020 \textit{Mathematics Subject Classification}: 20B05, 20E32, 20B15}
 
\begin{abstract}
For a positive integer $k$, a group $G$ is said to be totally $k$-closed if for each set $\Omega$ upon which $G$ acts faithfully, $G$ is the largest subgroup of $\Sym(\Om)$ that leaves invariant each of the $G$-orbits in the induced action on $\Om\times\cdots\times \Om=\Om^k$. Each finite group $G$ is totally $|G|$-closed, and $k(G)$ denotes the least integer $k$ such that $G$ is totally $k$-closed. We address the question of determining the closure number $k(G)$ for finite simple groups $G$. Prior to our work it was known that $k(G)=2$ for cyclic groups of prime order and for precisely six of the sporadic simple groups, and that $k(G)\geq3$ for all other finite simple groups. We determine the value for the alternating groups, namely $k(A_n)=n-1$. In addition, for all simple groups $G$, other than alternating groups and classical groups,  we show that $k(G)\leq 7$. Finally, if $G$ is a finite simple classical group with natural module of dimension $n$, we show that $k(G)\leq n+2$ if $n \ge 14$, and $k(G) \le \lfloor n/3 + 12 \rfloor$ otherwise, with smaller bounds achieved by certain families of groups. This is achieved by determining a uniform upper bound (depending on $n$ and the type of $G$) on the base sizes of the primitive actions of $G$, based on known bounds for specific actions. We pose several open problems aimed at completing the determination of the closure numbers for finite simple groups.
\end{abstract}

\section{Introduction}

The notion of $k$-closure of a permutation group $G$ on a set $\Omega$ was introduced  in 1969 by Wielandt~\cite[Definition 5.3]{W}: namely the \emph{$k$-closure} $G^\Omk$  of $G$ is the set of all  $g\in\Sym(\Om)$ (permutations of $\Om$) such that $g$ leaves invariant each $G$-orbit in the induced $G$-action on ordered $k$-tuples from $\Om$.  The $k$-closure $G^{(k),\Omega}$ is a subgroup of  $\Sym(\Om)$ containing $G$ \cite[Theorem 5.4]{W}, and a permutation group $G$ is said to be \emph{$k$-closed} if $G^{(k),\Omega}=G$. Determining when $G$ is $k$-closed can be important when studying the \emph{relational complexity} of $G$. This is the smallest positive integer $r$ such that, whenever $A$ and $B$ are ordered $n$-tuples from $\Om$ with $n \ge r$, and each $r$-subtuple of $A$ lies in the same $G$-orbit as the corresponding $r$-subtuple of $B$, it is also the case that $A \in B^G$. The relational complexity of $G$ is very difficult to calculate in general, but it is bounded below by the smallest integer $k$ such that $G$ is $k$-closed, as shown in \cite[Lemmas 1.6.5]{GLS}; see that monograph for an excellent introduction to relational complexity.

The $k$-closures for different faithful permutation representations of the same group $G$ may be quite different, even for the symmetric group $S_3$ (see~\cite[p.\,240]{CP}). 
We are interested in the stronger concept of \emph{total $k$-closure}, which is independent of the permutation representation of the group.
A group $G$ is said to be \emph{totally $k$-closed} if for each set $\Omega$ upon which $G$ acts faithfully we have  
$G=G^{(k),\Omega}$. The concept was suggested in 2016   by Holt~\cite{Holt}, and first studied  by Abdollahi and Arezoomand~\cite{AA} when $k=2$. 

For a permutation group $G\leq\Sym(\Omega)$, and $k\geq 2$, Wielandt~\cite[Theorem 5.8]{W} proved that
\begin{equation}\label{eq1}
G\leq G^{(k),\Omega}\leq G^{(k-1),\Omega},
\end{equation}
and in addition, see \cite[Theorem 5.12]{W}, that if $G$ is finite  then $G=G^{(k),\Omega}$ for 
$k=|G|$. Thus if a finite group $G$ is totally $(k-1)$-closed then it is automatically totally $k$-closed, and also $G$ is totally $k$-closed for sufficiently large $k$. We define the \emph{closure number} $k(G)$ of a finite group $G$ as the least positive integer $k$ such that $G$ is totally $k$-closed. 

In this paper we study the closure numbers for finite simple groups. In previous work it has been shown that $k(G)=2$ if $G$ is a cyclic group of prime order \cite[Theorem 1.2]{CP}, and for exactly six nonabelian simple groups \cite[Theorem 1.1]{AIPT}, namely  the Janko groups $J_1, J_3$ and  $J_4$, together with  $Ly, Th$ and the Monster $M$. By \eqref{eq1}, $k(G)\geq3$ for all other finite simple groups. The problem of determining the closure number $k(G)$ for the remaining simple groups was posed in \cite[Question 3]{AIPT}, and in particular for determining all those with closure number $3$ (if any exist), see also \cite[Problem 3]{CP} and the Kourovka Notebook \cite[New Problems 20.2 and 20.3]{K}. 

A crucial tool for this work uses the notion  of a \emph{base} for a permutation group $G\leq \Sym(\Omega)$, that is to say, a set of points $\alpha_1,\dots,\alpha_{b}\in\Omega$ such that the only element of $G$ fixing each $\alpha_i$ is the identity. If $G\leq \Sym(\Omega)$ and $G$ has a base of size $b$, then  Wielandt \cite[Theorem 5.12]{W} proved that 
$G=G^{(b+1),\Omega}$. To see how this is used, let us consider the finite alternating groups $G=A_n$, $n\geq5$. For the natural action of $G$ on $\Omega=\{1,\dots,n\}$, and for $k\leq n-2$, the groups $G$ and $\Sym(\Omega)$ have exactly the same orbits on $\Omega^k$, and hence $G^{(k),\Omega}=\Sym(\Omega)\ne G$, so by definition the closure number satisfies $k(G)\geq n-1$.  On the other hand any $(n-2)$-subset of $\Omega$ is a base for $G$ and hence  $G^{(n-1),\Omega}= G$ by \cite[Theorem 5.12]{W}. We prove in Lemma~\ref{p:An} that, in fact, in every faithful permutation representation of $G$, the minimum size of a base is at most $n-2$, and hence by \cite[Theorem 5.12]{W}, the action is $(n-1)$-closed.  Thus the closure number is determined.

\begin{theorem}\label{t:An}
For $n\geq5$, the closure number $k(A_n)=n-1$. 
\end{theorem}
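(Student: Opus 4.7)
The lower bound $k(A_n) \ge n-1$ is already established in the excerpt via the natural action on $n$ points, so I focus on the upper bound $k(A_n) \le n-1$. By Wielandt's result that a permutation group with a base of size $b$ is $(b+1)$-closed, this reduces to Lemma~\ref{p:An}: in every faithful permutation representation of $A_n$, the minimum base size is at most $n-2$.

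Two reductions sharpen the sublemma. First, since $A_n$ is simple for $n \ge 5$, any faithful $A_n$-action on $\Omega$ restricts faithfully to at least one orbit: otherwise every orbit-kernel is a proper nontrivial normal subgroup of $A_n$, forcing each to equal $A_n$ and the global action to be trivial, a contradiction. Any base on such a faithful orbit remains a base for the whole action, so I may assume the action is transitive, i.e., $\Omega \cong A_n/H$ for some proper subgroup $H < A_n$. Second, a base of size $b$ corresponds to conjugates $H^{g_1}, \ldots, H^{g_b}$ of $H$ with trivial intersection, and if $H \le M$ then $\bigcap_i H^{g_i} \le \bigcap_i M^{g_i}$, so the minimum base size on $A_n/H$ is at most that on $A_n/M$; I may therefore assume $H$ is maximal in $A_n$.

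The classical form of the O'Nan--Scott theorem partitions the maximal subgroups of $A_n$ into three families: intransitive stabilizers $(\Sym(k) \times \Sym(n-k)) \cap A_n$ of $k$-subsets (with $1 \le k < n/2$); transitive imprimitive stabilizers $(\Sym(a) \wr \Sym(b)) \cap A_n$ of uniform partitions of $n = ab$ (with $a, b \ge 2$); and primitive subgroups. For the intransitive case with $k = 1$, the natural action has base size exactly $n - 2$; for $k \ge 2$, an explicit construction---for instance, the $n - 2$ edges $\{1, j\}$ for $2 \le j \le n-1$ when $k = 2$, generalised to larger $k$ by varying one coordinate of a fixed $k$-subset---yields a base of size at most $n - 2$. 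The imprimitive case admits a similarly explicit choice of partitions, with the resulting base size well below $n - 2$. For the primitive case, classical order bounds on $|H|$ (Bochert, Praeger--Saxl, Mar\'oti) combined with base-size results for non-standard primitive actions of almost simple and affine groups (Burness et al., Halasi--Liebeck--Mar\'oti) bound the base size by a small absolute constant, comfortably at most $n - 2$ once $n$ exceeds a fixed threshold, with the few remaining small-degree cases checked by direct computation.

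The main obstacle is the primitive case: here a direct combinatorial construction is not available in general, and a uniform bound must be extracted from the modern base-size literature for primitive almost simple and affine groups. The intransitive case with $k = 1$ is precisely what forces $n - 2$ to be the sharp bound. Once the sublemma is established, Wielandt's theorem ensures that every faithful permutation representation of $A_n$ is $(n-1)$-closed, giving $k(A_n) \le n - 1$, and combining with the lower bound yields $k(A_n) = n - 1$.
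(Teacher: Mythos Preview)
Your proposal is correct and follows essentially the same approach as the paper: reduce to faithful primitive actions (your reduction to maximal subgroups is exactly the paper's Lemma~\ref{l:redn} and Corollary~\ref{c:redn}), then handle the three cases of maximal subgroups of $A_n$ separately, with the subset and partition cases corresponding to the paper's Lemmas~\ref{l:ksets} and~\ref{l:partns} and the primitive (non-standard) case to Lemma~\ref{l:bs1}(ii). The only differences are cosmetic: the paper cites Halasi and Liebeck/Burness--Garonzi--Lucchini for the subset and partition bounds rather than sketching constructions (your ``vary one coordinate'' idea needs a little more care for $k\ge 4$), and in the primitive case the relevant input is precisely the Burness--Guralnick--Saxl bound for non-standard actions of $A_n$ rather than order bounds on $|H|$ or anything about affine groups.
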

 
On the other hand we exploit known upper bounds on the base sizes of primitive permutation representations of the remaining simple groups in order to prove the following upper bounds 
for their closure numbers. 

\begin{theorem}\label{t:rest}
Suppose that $G$ is a finite nonabelian simple group that is not isomorphic to an alternating group. Then the following hold.
\begin{enumerate}
\item[\upshape{(a)}] If $G$ is a sporadic group, then $k(G)\leq 6$.
\item[\upshape{(b)}] If $G$ is an exceptional group of Lie type, then $k(G)\leqslant 7$.
\item[\upshape{(c)}] Suppose that $G$ is a classical group, with natural module of dimension $n$. Then either $G$ and an upper bound for $k(G)$ are given in Table \ref{tab:kclas}, or $k(G) \le \lfloor n/3+12 \rfloor$. In particular, if $n \ge 14$ in any case, then $k(G) \le n+2$.
\end{enumerate}
\end{theorem}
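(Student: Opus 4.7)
The strategy is the same as in Lemma \ref{p:An}: for any faithful action of $G$ on $\Om$, Wielandt's Theorem 5.12 in \cite{W} gives $G^{(b+1),\Om}=G$ whenever $G$ has a base of size $b$ on $\Om$. It therefore suffices to prove a uniform upper bound $b^{*}(G)$ on base sizes across all faithful actions of $G$, from which $k(G)\le b^{*}(G)+1$. I would first reduce to primitive actions: since $G$ is simple, every faithful $G$-set contains an orbit on which $G$ acts faithfully, and a base for that orbit is a base for the whole action. Moreover, for a transitive action on $G/H$ and any maximal subgroup $M\ge H$, the $G$-equivariant map $gH\mapsto gM$ transports bases, since $\bigcap_i g_iMg_i^{-1}=1$ implies $\bigcap_i g_iHg_i^{-1}=1$. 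Hence $b^{*}(G)$ equals the maximum base size over primitive actions of $G$.

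For part (a), the base sizes of the primitive actions of the sporadic simple groups have been determined explicitly in the literature (notably by Burness, O'Brien, Wilson and collaborators); the maximum is at most $5$, yielding $k(G)\le 6$. For part (b), the theorem of Burness-Liebeck-Shalev establishes that every primitive action of an exceptional group of Lie type has base size at most $6$, so $k(G)\le 7$.

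Part (c) is the main work and requires a class-by-class sweep through Aschbacher's classes $\CC_1,\dots,\CC_8$ and $\mathcal{S}$ of maximal subgroups of the classical group $G$ with natural module of dimension $n$, combining existing base-size bounds from the literature (largely due to Burness, Guralnick, Saxl, Liebeck, Shalev and their collaborators). The dominant contribution typically comes from class $\CC_1$, and specifically from the action on $1$-spaces: for instance, the action of $\PSL_n(q)$ on $\PG_{n-1}(q)$ has base size exactly $n+1$ (one needs $n$ points in general position to span the space, plus one additional point to kill the Cartan subgroup), giving the bound $k(G)\le n+2$. Subspace actions in higher dimension, together with the remaining geometric classes and the class $\mathcal{S}$, yield base sizes that are typically bounded by small absolute constants or grow slower than $n$, so they do not exceed the $\CC_1$ bound once $n$ is large.

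The principal obstacle is the low-dimensional case. For small $n$ Aschbacher's theorem has exceptions, and certain primitive actions give base sizes that can exceed $n+1$; these are exactly what produce the exceptional entries in Table \ref{tab:kclas} and necessitate the weaker residual bound $\lfloor n/3+12\rfloor$. The plan for these cases is to combine explicit lists of maximal subgroups of low-rank classical groups (from the Atlas and the work of Bray, Holt and Roney-Dougal) with action-by-action base-size estimates, using the lifting reduction above to handle non-maximal point stabilizers. Taking the maximum over all Aschbacher classes and all faithful actions then yields the stated bounds in each part.
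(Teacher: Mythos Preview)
Your reduction to primitive actions via Wielandt's base-size bound is exactly what the paper does (formalised there as Corollary~\ref{c:redn}), and part~(b) is correct and matches the paper.

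Part~(a), however, contains a factual error that breaks the argument. It is not true that every primitive action of a sporadic simple group has base size at most $5$: by \cite[Theorem~1]{BOW}, the groups $M_{23}$, $Co_2$ and $Co_3$ each have a primitive action of base size $6$, and $M_{24}$ in its degree-$24$ action has base size $7$. The pure base-size approach therefore yields only $k(G)\le 7$ for the first three and $k(G)\le 8$ for $M_{24}$. The paper closes this gap with arguments that go genuinely beyond base sizes (Lemmas~\ref{lem:M23} and~\ref{lem:M24}): it analyses arbitrary faithful actions of these four groups directly, exploiting the behaviour of $k$-closures on orbits and on block systems (Lemmas~\ref{lem:simpleintrans} and~\ref{lem:onblocks}) together with group-specific facts such as the nonexistence of index-$24$ subgroups in $M_{23}$. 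This is the idea your proposal is missing.

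For part~(c), your plan of sweeping through all Aschbacher classes $\CC_1,\dots,\CC_8,\mathcal{S}$ would work in principle but is more laborious than necessary. The paper instead uses the standard/non-standard dichotomy: by \cite{B}, every non-standard primitive action of a simple classical group already has base size at most $4$, so $\CC_2$--$\CC_8$ and $\mathcal{S}$ are disposed of in one stroke. Only the subspace actions need case analysis, and for these the paper imports the explicit bounds of Halasi--Liebeck--Mar\'oti \cite{HLM} (subspaces of dimension $\ge 3$) and Moscatiello--Roney-Dougal \cite{MR} (dimension $\le 2$). Incidentally, you have Table~\ref{tab:kclas} backwards: it records the families with the \emph{better} bound (at most $n+2$), while the residual bound $\lfloor n/3+12\rfloor$ is what remains for the low-dimensional unitary, symplectic and orthogonal groups not appearing there.
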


\begin{table}[ht]
\centering
\renewcommand{\arraystretch}{1.2}
\caption{Upper bounds for $k(G)$, for finite simple classical groups $G$ that satisfy the specified conditions, and are not isomorphic to alternating groups. Here, $\delta$ is the Kronecker delta.}
\label{tab:kclas}
\begin{tabular}{ ccc }
\hline
$G$ & Upper bound for $k(G)$ & Conditions \\
\hline
\hline
$\PSL(n,q)$ & $n+2-\delta_{2q}$ & Always\\
$\PSU(n,q)$ & $n+1$ & $n \in \{3,5\}$ or $n \ge 16$\\
$\PSp(n,q)$ & $n+1$ & $n \ge 14$\\
$\POm^\varepsilon(n,q)$ & $n$ & $n \ge 16$, with $q$ even if $n = 16$\\
\hline
\end{tabular}
\end{table}

We show in Lemma \ref{lem:M24} that $k(M_{24})=6$. Thus Theorem \ref{t:rest}(a) and \cite[Theorem 1.1]{AIPT} leave 19 sporadic groups $G$ for which $3\leqslant k(G)\leqslant 6$ and where we do not know the exact value.

\begin{question}
Determine $k(G)$ for these remaining $19$ sporadic simple groups.
\end{question}

The bound $k(G)\leq 7$ for exceptional groups of Lie type is derived (see Corollary~\ref{c:redn}) by using the fact that the minimum base size of primitive actions of such groups is at most 6. By \cite{B2018}, equality only holds for certain actions of $E_6(q)$ and $E_7(q)$, and so these are the only possibilities to have $k(G)=7$. However, we do not know if this bound is met.

\begin{question}
Determine a tight upper bound for $k(G)$ for an exceptional group $G$ of Lie type.
\end{question}

In many of the low-dimensional classical cases not specified here, our work in fact yields a smaller upper bound than the general bound of $\lfloor n/3+12 \rfloor$; see the comment following the proof of Theorem~\ref{t:rest}(c), before Corollary~\ref{c:exactprim}.
For simple $n$-dimensional classical groups we do not know whether the closure numbers grow linearly with $n$ or are in fact asymptotically smaller. 

\begin{question}
For a simple $n$-dimensional classical group $G$, is $k(G) = o(n)$?
\end{question}

In particular, deciding whether or not the closure numbers of finite simple classical groups are bounded or not, as the dimension of their natural module increases, would answer the open question \cite[20.3]{K} in the Kourovka Notebook.

For a group $G$ we define $k_{\mathrm{trans}}(G)$ to be the smallest integer $k$ such that $G^{(k),\Omega}=G$ for each set $\Omega$ upon which $G$ acts faithfully and transitively. Similarly, we define $k_{\mathrm{prim}}(G)$  to be the smallest integer $k$ such that $G^{(k),\Omega}=G$ for each set $\Omega$ upon which $G$ acts faithfully and primitively. Clearly, \[k(G)\geqslant k_{\mathrm{trans}}(G)\geqslant k_{\mathrm{prim}}(G).\]
We note that we can use the results of \cite{LPS1988} and \cite{PSaxl} to give absolute upper bounds on $k_{\mathrm{prim}}(G)$. In particular $k_{\mathrm{prim}}(G)\leqslant 6$ for all complete simple groups $G$.

\subsection*{Acknowledgements}
The research for this paper began at the 2022 Research Retreat of the Centre for the Mathematics of Symmetry and Computation of the University of Western Australia. The authors acknowledge discussions with their colleagues Vishnuram Arumugam and Alice Devillers.
The first author was supported by a St Leonard's International Doctoral Fees Scholarship and a School of Mathematics \& Statistics PhD Funding Scholarship at the University of St Andrews. The research of the second and third authors was supported by ARC Discovery Project Grants DP190101024 and DP190100450, respectively.

 \section{Preliminary results about $k$-closures}

We begin by noting the following result of Wielandt which we will often use without reference.

\begin{lemma}\label{lem:obvious} \cite[Theorem 5.6]{W}
Let $G$ act on a set $\Omega$ and $k\geqslant 1$ be an integer. Then  $h\in G^{(k),\Omega}$ if and only if for all $\alpha_1,\ldots,\alpha_k\in\Omega$ there exists $g\in G$ such that $(\alpha_1,\ldots,\alpha_k)^g=(\alpha_1,\ldots,\alpha_k)^h$.
\end{lemma}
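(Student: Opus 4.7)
The plan is to derive the equivalence directly from the definition of $k$-closure, namely that $G^{(k),\Omega}$ consists of those $h \in \Sym(\Omega)$ which leave each $G$-orbit on $\Omega^k$ (under the diagonal action) invariant setwise. Both directions then reduce to little more than definition-chasing.

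For the forward implication, I would assume $h \in G^{(k),\Omega}$ and fix an arbitrary tuple $\bar\alpha = (\alpha_1,\ldots,\alpha_k) \in \Omega^k$. By definition of the $k$-closure, $h$ stabilises the $G$-orbit $\bar\alpha^G$ setwise, so $\bar\alpha^h \in \bar\alpha^G$, which is exactly the assertion that there exists $g \in G$ with $(\alpha_1,\ldots,\alpha_k)^g = (\alpha_1,\ldots,\alpha_k)^h$.

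For the converse, I would fix an arbitrary $G$-orbit $\mathcal{O} \subseteq \Omega^k$ and show $\mathcal{O}^h = \mathcal{O}$. The inclusion $\mathcal{O}^h \subseteq \mathcal{O}$ is immediate from the pointwise hypothesis applied to any representative of $\mathcal{O}$. The reverse inclusion is automatic when $\Omega$ is finite (by injectivity of $h$ combined with finiteness of $\mathcal{O}$); in general, I would observe that the hypothesis also applies to $h^{-1}$, since for any $\bar\beta \in \Omega^k$, setting $\bar\alpha := \bar\beta^{h^{-1}}$ and applying the hypothesis at $\bar\alpha$ yields $g \in G$ with $\bar\alpha^h = \bar\alpha^g$, whence $\bar\beta^{h^{-1}} = \bar\beta^{g^{-1}}$. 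Thus $h^{-1}$ maps $\mathcal{O}$ into itself too, and the two inclusions combine to give $\mathcal{O}^h = \mathcal{O}$, placing $h$ in $G^{(k),\Omega}$.

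There is no substantive obstacle: the proof is genuinely an unravelling of the definition and could fit in a few lines. The only point meriting any care is verifying the reverse set-inclusion in the converse direction, handled either by finiteness or by the short argument above using $h^{-1}$.
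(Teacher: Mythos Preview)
Your proof is correct; the paper itself does not give a proof of this lemma but simply cites Wielandt \cite[Theorem 5.6]{W}, so there is no ``paper's own proof'' to compare against. What you have written is the standard definition-unravelling argument, and the small extra care you take with the reverse inclusion (via $h^{-1}$ in the infinite case) is a nice touch that makes the argument work without assuming finiteness of $\Omega$.
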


Note that, because of our focus on the set of all faithful permutation representations of a given group $G$, we never write $G^{(k)}$ for a $k$-closure without giving also the set acted upon. So, for example, if $G$ acts on a set $\Omega$ and $\Delta$ is a $G$-invariant subset of $\Omega$, then  $(G^{(k)})^\Delta$ has no meaning in our work. We can write $G^{(k),\Delta}$ for the $k$-closure of the action of $G$ on $\Delta$,  and $(G^{(k),\Omega})^\Delta$ for the permutation group induced on $\Delta$ by the  $k$-closure of the action of $G$ on $\Omega$.

We also note the following result.

\begin{lemma}\label{lem:induced}
Let $G$ act on a set $\Omega$ and let $\Delta$ be a $G$-invariant subset of $\Omega$. Then $(G^{(k),\Omega})^\Delta \leqslant G^{(k),\Delta}$.
\end{lemma}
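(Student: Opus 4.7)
The plan is to apply Wielandt's criterion (Lemma~\ref{lem:obvious}) directly. I first need to verify that $\Delta$ is invariant under $G^{(k),\Omega}$, so that the restriction $(G^{(k),\Omega})^\Delta$ even makes sense. For this, I observe that any $h \in G^{(k),\Omega}$ lies in $G^{(1),\Omega}$ by the chain \eqref{eq1}, so $h$ permutes the $G$-orbits on $\Omega$. Since $\Delta$ is $G$-invariant, it is a union of $G$-orbits, and therefore $\Delta^h = \Delta$. Hence every $h \in G^{(k),\Omega}$ restricts to a permutation $h^\Delta \in \Sym(\Delta)$.

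Next, fix $h \in G^{(k),\Omega}$; I will show that $h^\Delta \in G^{(k),\Delta}$. By Lemma~\ref{lem:obvious} applied in $\Sym(\Delta)$, it suffices to check that for every $(\delta_1,\ldots,\delta_k) \in \Delta^k$ there exists $g \in G$ with
\[
(\delta_1,\ldots,\delta_k)^g = (\delta_1,\ldots,\delta_k)^{h^\Delta}.
\]
Since $\Delta \subseteq \Omega$, the tuple $(\delta_1,\ldots,\delta_k)$ also lies in $\Omega^k$, and because $h \in G^{(k),\Omega}$, Lemma~\ref{lem:obvious} applied in $\Sym(\Omega)$ produces $g \in G$ with $(\delta_1,\ldots,\delta_k)^g = (\delta_1,\ldots,\delta_k)^h$. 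But since each $\delta_i \in \Delta$, we have $\delta_i^{h^\Delta} = \delta_i^h$ by the definition of the restriction, so this same $g$ witnesses the required equality on $\Delta$.

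Combining the two steps, every element of $(G^{(k),\Omega})^\Delta$ is of the form $h^\Delta$ for some $h \in G^{(k),\Omega}$, and each such $h^\Delta$ belongs to $G^{(k),\Delta}$, giving $(G^{(k),\Omega})^\Delta \leqslant G^{(k),\Delta}$. There is no real obstacle here; the only subtle point is the preliminary observation that the $G$-invariance of $\Delta$ automatically forces $G^{(k),\Omega}$-invariance, which is exactly what allows one to pass the Wielandt condition between $\Omega$ and $\Delta$.
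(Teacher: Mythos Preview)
Your proof is correct. The paper does not actually give an argument here---it simply cites \cite[Exercise~5.25]{W}---so your direct verification via Lemma~\ref{lem:obvious} supplies exactly the details one would write when solving that exercise. One minor wording point: from $h\in G^{(1),\Omega}$ you get that $h$ fixes each $G$-orbit on $\Omega$ setwise (not merely that $h$ permutes the orbits among themselves), and it is this stronger statement that yields $\Delta^h=\Delta$; your conclusion is right, but the phrase ``permutes the $G$-orbits'' slightly undersells what you are using.
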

\begin{proof}
  \cite[Exercise 5.25]{W}.
\end{proof}

Next we prove a result about the $k$-closure of an intransitive action of a nonabelian simple group.



  \begin{lemma}\label{lem:simpleintrans}
  Let $G$ be a nonabelian simple group acting on a set $\Omega$ with orbits $\Delta_1,\ldots,\Delta_t$ with $t\geqslant2$. Suppose that for each $i,j\in\{1,\ldots, t\}$ there is a point $\beta\in\Delta_i$ such that $G_\beta$ is not transitive on $\Delta_j$, and also that $G^{(k),\Delta_i}=G^{\Delta_i}\cong G$ for each $i=1,\ldots, t$ and some $k\geq 2$. Then $G^{(k),\Omega}=G^\Omega\cong G$.
  \end{lemma}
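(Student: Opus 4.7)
The plan is to take an arbitrary $h \in G^{(k),\Omega}$ and show that $h$ coincides with the action of some single element of $G$. By Lemma~\ref{lem:induced} and the hypothesis $G^{(k),\Delta_i}=G^{\Delta_i}$, each restriction $h|_{\Delta_i}$ agrees with the action of a uniquely determined $g_i\in G$ (uniqueness because $G^{\Delta_i}\cong G$ forces the action on $\Delta_i$ to be faithful). It thus suffices to prove $g_1 = g_2 = \cdots = g_t$. Replacing $h$ by $g_1^{-1}h\in G^{(k),\Omega}$, I may assume $g_1 = \id$ so that $h$ fixes $\Delta_1$ pointwise, and then aim to show $g_i = \id$ for each $i\ge 2$. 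Since $\Delta_1\cup\Delta_i$ is a $G$-invariant subset of $\Omega$, Lemma~\ref{lem:induced} lets me replace $\Omega$ by $\Delta_1\cup\Delta_i$, reducing to the case $t=2$.

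Write $g:=g_2$. Since $k\ge 2$, the inclusion $G^{(k),\Omega}\leq G^{(2),\Omega}$ from (\ref{eq1}) ensures $h$ preserves $G$-orbits on ordered pairs. Applied to an arbitrary $(\beta,\gamma)\in\Delta_1\times\Delta_2$, there must exist $g'\in G$ with $(\beta,\gamma)^{g'}=(\beta,\gamma^{g})$; thus $g'\in G_\beta$ and $\gamma^g = \gamma^{g'}\in\gamma^{G_\beta}$. As $\gamma$ varies over $\Delta_2$ and the $G_\beta$-orbits partition $\Delta_2$, $g$ maps every $G_\beta$-orbit on $\Delta_2$ onto itself. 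Letting $\beta$ vary over $\Delta_1$, I conclude that $g$ lies in
\[
N := \{\,x \in G : x \text{ preserves each $G_\beta$-orbit on $\Delta_2$ setwise for every $\beta\in\Delta_1$}\,\}.
\]

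The finish uses normality and simplicity. I would verify $N\normal G$: for $n\in N$, $x\in G$, and any $G_\beta$-orbit $\Gamma\subseteq\Delta_2$ with $\beta\in\Delta_1$, the set $\Gamma^x$ is a $G_{\beta^x}$-orbit via the identity $x^{-1}G_\beta x=G_{\beta^x}$, and $\beta^x\in\Delta_1$ as $\Delta_1$ is $G$-invariant; thus $n$ preserves $\Gamma^x$ setwise, and applying $x^{-1}$ gives $\Gamma^{xnx^{-1}}=\Gamma$, so $xnx^{-1}\in N$. Since $G$ is simple, $N\in\{1,G\}$. If $N = G$, then for any $\gamma\in\Delta_2$ and any $\beta\in\Delta_1$ one would have $\Delta_2 = \gamma^G\subseteq\gamma^{G_\beta}$, so $G_\beta$ would be transitive on $\Delta_2$ for every $\beta\in\Delta_1$, contradicting the hypothesis that some $\beta_0\in\Delta_1$ has $G_{\beta_0}$ intransitive on $\Delta_2$. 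Hence $N=1$, so $g=\id$, and undoing the reductions gives $g_1=g_2=\cdots=g_t$ and therefore $h\in G^\Omega$.

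The main point of care I anticipate is the normality verification: one must keep right-action conventions straight to see that $\Gamma^x$ is genuinely a $G_{\beta^x}$-orbit and that the shifted base point $\beta^x$ still lies in $\Delta_1$, so that $n\in N$ applies to $\Gamma^x$. Once this is pinned down, the simplicity of $G$ combined with the intransitivity hypothesis closes the argument immediately.
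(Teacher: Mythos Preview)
Your argument is correct and takes a genuinely different route from the paper's. The paper embeds $G^{(k),\Omega}$ as a subdirect subgroup of $\prod_i G^{(k),\Delta_i}\cong G^t$ and then invokes Scott's lemma to write it as a direct product of diagonal subgroups with pairwise disjoint supports; the contradiction comes from supposing there are at least two diagonal factors and using the $k$-closure property on a tuple $(\alpha,\beta,\ldots,\beta)$ straddling two supports. You instead work element-by-element: having normalised so that $h$ fixes $\Delta_1$ pointwise, you show the ``discrepancy'' $g$ on $\Delta_2$ lies in the subgroup $N$ of elements preserving every $G_\beta$-orbit on $\Delta_2$ for all $\beta\in\Delta_1$, verify $N\normal G$ directly, and kill $N$ by simplicity plus the intransitivity hypothesis. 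Your approach is more elementary in that it avoids the citation to Scott's lemma and is entirely self-contained; it also makes transparent that only the hypothesis for pairs $(1,i)$ is actually used once an anchor orbit is fixed. The paper's approach, on the other hand, gives slightly more structural information about $G^{(k),\Omega}$ along the way and generalises more readily to settings where one wants to understand the full subdirect structure rather than just equality with $G$. One small remark: your reduction to $t=2$ via Lemma~\ref{lem:induced} is not really needed, since the pair argument with $(\beta,\gamma)\in\Delta_1\times\Delta_i$ already works directly inside $\Omega$ using $h\in G^{(2),\Omega}$; but the reduction is harmless.
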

  \begin{proof}
  For each $i=1,\ldots, t$, $G$ acts faithfully on $\Delta_i$ since $G^{(k),\Delta_i}\cong G$. Also, by Lemma~\ref{lem:induced}, $G^{(k),\Omega} \leqslant    \prod_{i=1}^t (G^{(k),\Omega})^{\Delta_i} \leqslant \prod_{i=1}^t G^{(k),\Delta_i} \cong G^t$. It follows that $G^{(k),\Omega}$ is a subdirect subgroup of $G^{(k),\Delta_1}\times\cdots\times G^{(k),\Delta_t}\cong G^t$, and so,  by a lemma of Scott (\cite[p.\,328]{scott} or see \cite[Theorem 4.16]{PS}), the group $G^{(k),\Omega}$ is equal to a direct product of $\ell$ diagonal subgroups $H_j\cong G$ for $j=1,\ldots, \ell$ (where $\ell\geq 1$), with pairwise disjoint supports, that is to say, there is a partition $\{\mathcal{O}_1,\ldots,\mathcal{O}_\ell\}$ of $\{\Delta_1,\ldots,\Delta_t\}$ such that, for each $j$, $(H_j)^\Delta\cong G$ for all $\Delta\in\mathcal{O}_j$ and $(H_j)^\Delta=1$ for all $\Delta\notin\mathcal{O}_j$. In particular, $G^{(k),\Omega}\cong G^\ell$. 
    Suppose that $\ell >1$. Choose $\Delta_i\in\mathcal{O}_1$ and $\Delta_j\in\mathcal{O}_2$.  Then the kernel $(G^{(k),\Omega})_{(\Delta_i)}$ of the action on $\Delta_i$ contains the diagonal subgroup $H_2$, which induces $G$ on $\Delta_j$. By assumption, there exists $\beta\in \Delta_i$ such that $G_\beta$ is not transitive on $\Delta_j$. Choose  $\alpha\in\Delta_j$ and consider the $k$-tuple $(\alpha,\beta,\beta,\ldots,\beta)$ of points. By definition of the $k$-closure, for each $h\in H_2 < G^{(k),\Omega}$, there exists $g\in G$ such that $(\alpha,\beta,\beta,\ldots,\beta)^g=(\alpha,\beta,\ldots,\beta)^h=(\alpha^h,\beta,\beta,\ldots,\beta)$. In particular $\alpha^g=\alpha^h$ and $g\in G_\beta$. Since $H_2$ is transitive on $\Delta_j$ it follows that $G_\beta$ is transitive on $\Delta_j$, which is a contradiction. Thus $\ell=1$, and so  $G^{(k),\Omega}=G^\Omega\cong G$.
  \end{proof}

 \begin{corollary}\label{cor:alltransequiv}
   Let $G$ be a nonabelian simple group acting on a set $\Omega$ with orbits $\Delta_1,\ldots,\Delta_t$ with $t\geqslant2$ such that the actions of $G$ on each orbit are pairwise equivalent. If, for some $k$, $G^{(k),\Delta_i}=G^{\Delta_i}\cong G$, then $k\geq2$ and $G^{(k),\Omega}=G^\Omega \cong G$.
 \end{corollary}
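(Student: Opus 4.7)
The plan is to deduce the corollary directly from Lemma \ref{lem:simpleintrans} after verifying its hypotheses, with a brief preliminary argument for the inequality $k \geq 2$.

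For the bound $k \geq 2$: since $G$ is transitive on each orbit $\Delta_i$, the $1$-closure $G^{(1),\Delta_i}$ is the full symmetric group $\Sym(\Delta_i)$. If $k = 1$ were permissible, the hypothesis $G^{(1),\Delta_i} \cong G$ would force $G \cong \Sym(\Delta_i)$, which is impossible for a nonabelian simple group (note also $|\Delta_i| \geq 2$ since $G$ acts faithfully on $\Delta_i$ and $G$ is nontrivial). So $k \geq 2$.

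For the main statement, the task is to show that for any $i, j \in \{1, \ldots, t\}$ there exists $\beta \in \Delta_i$ such that $G_\beta$ is not transitive on $\Delta_j$; combined with the given hypothesis $G^{(k),\Delta_i} = G^{\Delta_i} \cong G$, Lemma \ref{lem:simpleintrans} then yields $G^{(k),\Omega} = G^\Omega \cong G$. Because the actions on $\Delta_i$ and $\Delta_j$ are pairwise equivalent, there is a $G$-equivariant bijection $\phi : \Delta_i \to \Delta_j$, and consequently $G_\beta = G_{\phi(\beta)}$ for every $\beta \in \Delta_i$. Thus $G_\beta$ fixes the point $\phi(\beta) \in \Delta_j$. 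Since $G$ acts faithfully on $\Delta_j$ and is nonabelian, $|\Delta_j| \geq 2$, so a subgroup fixing a point of $\Delta_j$ cannot be transitive on $\Delta_j$. Hence $G_\beta$ is not transitive on $\Delta_j$, completing the verification of the hypothesis.

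I expect no real obstacle: the corollary is essentially a streamlined specialisation of Lemma \ref{lem:simpleintrans} to the case of equivalent orbit actions, and the only non-trivial observation is the point-fixing argument via the equivariant bijection, which is immediate from the definition of equivalent actions.
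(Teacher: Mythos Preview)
Your proposal is correct and follows essentially the same approach as the paper: both verify the hypotheses of Lemma~\ref{lem:simpleintrans} by observing that equivalent actions force $G_\beta$ to fix a point in every orbit (hence to be intransitive there), and both dispose of $k\geq 2$ via $G^{(1),\Delta_i}=\Sym(\Delta_i)\ne G$. The only difference is that you spell out the equivariant bijection explicitly, which is fine.
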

 
 \begin{proof} Since the actions of $G$ are pairwise equivalent, if $\beta\in \Delta_i$ then $G_\beta$ fixes a point in each orbit and so is not transitive on any $\Delta_j$. Suppose that $G^{(k),\Delta_i}=G^{\Delta_i}\cong G$, for some $k$. Then $k\geq2$, since $G^{(1),\Delta_i}={\rm Sym}(\Delta_i) > G^{\Delta_i}$ as $G^{\Delta_i}\cong G$ is a nonabelian simple group.  The assertion now follows from Lemma~\ref{lem:simpleintrans}.
 \end{proof}

  Next we need some results about the $k$-closure of an imprimitive group.

     \begin{lemma}\label{lem:onblocks}\label{lem:GBB}\label{l:kernel}
  Let $G$ be a group with a transitive action on a set $\Omega$, and suppose that $\mathcal{B}$ is a nontrivial $G$-invariant partition of $\Omega$. Let $k$ be an integer such that $2\leqslant k\leqslant |\mathcal{B}|$ and let $B\in\mathcal{B}$. Then the following hold:
  \begin{enumerate}
      \item[\upshape{(a)}] $\mathcal{B}$ is a $G^{(k),\Omega}$-invariant partition.
            \item[\upshape{(b)}]  $(G^{(k),\Omega})^{\mathcal{B}}\leqslant G^{(k),\mathcal{B}}$.
            \item[\upshape{(c)}] $(G^{(k),\Omega})_B^B\leqslant (G_B)^{(k),B}$.
             \item[\upshape{(d)}] If there exist $B_1,\ldots,B_k\in\mathcal{B}$ such that $G_{B_1,\ldots,B_k}=1$, then $G^{(k),\Omega}$ acts faithfully on $\mathcal{B}$.
       \end{enumerate}
    \end{lemma}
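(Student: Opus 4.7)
The plan is to handle the four parts in turn, with later parts building on earlier ones.

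For (a), the idea is to apply Lemma~\ref{lem:obvious} to $2$-tuples, which is permitted since $k\geq 2$. For $h \in G^{(k),\Omega}$ and any $B \in \mathcal{B}$, pick $\alpha \in B$; for each $\beta \in B$ there is $g \in G$ with $(\alpha,\beta)^g=(\alpha,\beta)^h$. Because $\mathcal{B}$ is $G$-invariant, $\alpha^g$ and $\beta^g$ lie in the common block $B^g$, so $\alpha^h$ and $\beta^h$ do too. Hence $B^h$ is contained in a single block; since $|B^h|=|B|$ equals the common block size (using that $G$, hence $\mathcal{B}$, is transitive), equality holds and $B^h \in \mathcal{B}$.

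For (b), I would lift $k$-tuples of blocks to $k$-tuples of points. Given $h \in G^{(k),\Omega}$ and $(B_1,\dots,B_k)\in\mathcal{B}^k$, pick $\alpha_i \in B_i$; the $k$-closure provides $g \in G$ with $\alpha_i^g=\alpha_i^h$ for all $i$. Both $B_i^g$ and $B_i^h$ are blocks (by the $G$-invariance of $\mathcal{B}$, and by (a) respectively) and both contain $\alpha_i^g=\alpha_i^h$, so they coincide. Applying Lemma~\ref{lem:obvious} on $\mathcal{B}$ then yields $h^{\mathcal{B}} \in G^{(k),\mathcal{B}}$.

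For (c), the key is the standard block property: if $g \in G$ sends a point of $B$ into $B$, then $B^g=B$. For $h \in (G^{(k),\Omega})_B$ and $(\alpha_1,\dots,\alpha_k)\in B^k$, take $g \in G$ with $\alpha_i^g=\alpha_i^h$; since $h$ fixes $B$ setwise, each $\alpha_i^h \in B$, so $\alpha_1 \in B\cap B^g$ forces $g \in G_B$. Hence $h|_B$ satisfies the defining condition (Lemma~\ref{lem:obvious}) for membership in the $k$-closure of $G_B$ acting on $B$.

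Part (d) is the main obstacle. Let $K$ denote the kernel of the induced action of $G^{(k),\Omega}$ on $\mathcal{B}$ and take $h \in K$, so $h$ fixes every block setwise. For the hypothesised tuple $(B_1,\dots,B_k)$ and any choice of $\alpha_i \in B_i$, the $k$-closure supplies $g \in G$ with $\alpha_i^g = \alpha_i^h \in B_i$; hence $g \in G_{B_1}\cap\cdots\cap G_{B_k}=1$, giving $\alpha_i^h = \alpha_i$. Since the $\alpha_i$ were arbitrary in their blocks, $h$ fixes every point of $B_1\cup\cdots\cup B_k$. The subtlety is that this only controls $h$ on those specific blocks, so I would use the transitivity of $G$ on $\mathcal{B}$ (inherited from transitivity on $\Omega$) to extend: for any $B \in \mathcal{B}$, pick $g_0 \in G$ with $B_1^{g_0}=B$; then $G_{B,B_2^{g_0},\dots,B_k^{g_0}} = g_0^{-1}G_{B_1,\dots,B_k}g_0 = 1$, and repeating the previous argument with this conjugated tuple shows $h$ is trivial on $B$. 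Varying $B$ over $\mathcal{B}$ yields $h=1$, so $K=1$.
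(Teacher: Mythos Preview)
Your proof is correct and follows essentially the same approach as the paper's: parts (a)--(c) are argued identically (modulo a harmless slip in (c), where you want $\alpha_1^g \in B\cap B^g$ rather than $\alpha_1$), and in (d) both arguments conjugate the base tuple $(B_1,\dots,B_k)$ by an element of $G$ to hit an arbitrary point or block---you first fix $B_1\cup\cdots\cup B_k$ pointwise and then conjugate, whereas the paper starts from an arbitrary $\alpha\in\Omega$ and conjugates so that $\alpha$ lies in the first block of the translated tuple, but the content is the same.
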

 
  \begin{proof}
  Let $g\in G^{(k),\Omega}$.
  Let $B\in\mathcal{B}$ and choose $\alpha,\beta\in B$ with $\alpha\neq \beta$, and consider the $k$-tuple $(\alpha,\beta,\beta,\ldots,\beta)$. Since $g\in G^{(k),\Omega}$, there exists $h\in G$ such that $(\alpha,\beta,\beta,\ldots,\beta)^h=(\alpha,\beta,\beta,\ldots,\beta)^g$. Then since $\mathcal{B}$ is a $G$-invariant partition of $\Omega$, it follows that $\alpha^g$ and $\beta^g$ lie in the same part of $\mathcal{B}$. Hence  $\mathcal{B}$ is also invariant under $G^{(k),\Omega}$ and part (a) holds

Now consider $g^{\mathcal{B}}$. Let $B_1,\ldots, B_k\in\mathcal{B}$ and choose $\beta_i\in B_i$, for $i=1,\dots,k$. Then there exists $h\in G$ such that $(\beta_1,\ldots,\beta_k)^g= (\beta_1,\ldots,\beta_k)^h$. Thus $(B_1,\ldots,B_k)^{g^\mathcal{B}}=(B_1,\ldots,B_k)^{h^{\mathcal{B}}}$ and so $g^{\mathcal{B}}\in G^{(k),\mathcal{B}}$, as required for part (b).

 Next, let $h\in (G^{(k),\Omega})_B$, so that $h^B\in (G^{(k),\Omega})_B^B$. To show that $h^B\in (G_B)^{(k),B}$ we prove the criteria of Lemma \ref{lem:obvious}, namely for all $k$-tuples $(\alpha_1,\dots,\alpha_k)\in B^k$, we find an element $x\in (G_B)^{B}$ such that $(\alpha_1,\dots,\alpha_k)^{h^B}=(\alpha_1,\dots,\alpha_k)^x$. 
  Since $h\in (G^{(k),\Omega})_B$, there exists $y\in G$ such that  $(\alpha_1,\dots,\alpha_k)^{h}=(\alpha_1,\dots,\alpha_k)^y$. Further, since $h$ fixes $B$ setwise it follows that $y\in G_B$. Then taking $x:=y^B$ we have the required equality for (c).
  
   Finally, suppose that there exist $B_1,\ldots,B_k\in\mathcal{B}$ such that $G_{B_1,\ldots,B_k}=1$. Let $h$ lie in the kernel of the action of $G^{(k),\Omega}$ on $\mathcal{B}$, and choose $\alpha\in\Omega$. Since $G$ acts transitively on $\Omega$ there exists $g\in G$ such that $\alpha^{g^{-1}}\in B_1$. For each $i=2,3,\ldots,k$ choose $\beta_i\in B_i^g$. Since $h\in G^{(k),\Omega}$, there exists $x\in G$ such that $(\alpha,\beta_2,\ldots,\beta_k)^h=(\alpha,\beta_2,\ldots,\beta_k)^x$. By its definition, $h$ fixes each $B_i^g$ setwise, and so $\beta_i^x\in B_i^g$ for all $i$ and $\alpha^x\in B_1^g$. Thus $x\in G_{B_1^g,B_2^g,\ldots,B_k^g}=(G_{B_1,\ldots,B_k})^g=1$. Hence $\alpha^h=\alpha^x=\alpha$. Since $\alpha$ was arbitrary and $G^{(k),\Omega}\leqslant\Sym(\Omega)$, it follows that $h=1$ and so $G^{(k),\Omega}$ acts faithfully on $\mathcal{B}$ as stated in part~(d).
  \end{proof}
  
   


  


\section{Links between bases and closures}\label{s:bases}

For a finite group $G$ acting on a set $\Omega$, let $b(G^\Omega)$ denote the minimum size of a base for the permutation group $G^\Omega \leq\Sym(\Omega)$ induced by $G$ on $\Omega$; $b(G^\Omega)$ is called the \emph{base size} of $G^\Omega$. When $G$ is transitive, a \emph{maximal block system} for $G$ on $\Omega$ is a $G$-invariant partition $\Sigma$ of $\Omega$ such that $|\Sigma|>1$ and the induced action $G^\Sigma$ is primitive.  In other words, the only $G$-invariant partition strictly coarser than $\Sigma$ (that is, with each part a union of more than one part of $\Sigma$) is the universal partition $\{\Omega\}$ with just one part.  Note that $\Sigma$ is allowed to be the partition into singletons.

\begin{lemma}\label{l:redn}
Let $G$ be a finite nonabelian simple group, and suppose that $G$ has a faithful permutation representation on a set $\Omega$. Let $\Omega_0$ be a $G$-orbit on $\Omega$ with $|\Omega_0|>1$, and let $\Sigma$ be a maximal block system for $G$ on $\Omega_0$. Then the base size $b(G^\Omega)$ satisfies  $b(G^\Omega)\leq b(G^\Sigma)$.   
\end{lemma}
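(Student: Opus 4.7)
My plan is to produce a base for $G^\Omega$ of size $b(G^\Sigma)$ by lifting a minimum base in $\Sigma$ to a set of point-representatives in $\Omega_0 \subseteq \Omega$. The first thing I would verify is that the action of $G$ on $\Sigma$ is faithful, so that base sizes computed in $G$ and in $G^\Sigma$ coincide. Since $|\Omega_0|>1$ and $G$ is transitive on $\Omega_0$, the kernel of the action $G \to G^{\Omega_0}$ is a proper normal subgroup of the simple group $G$, hence trivial; similarly, since $|\Sigma|>1$, the kernel of $G^{\Omega_0} \to G^\Sigma$ is a proper normal subgroup of $G^{\Omega_0} \cong G$, and so is also trivial. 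Therefore $G \cong G^\Sigma$.

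Next, set $b := b(G^\Sigma)$ and choose $B_1, \dots, B_b \in \Sigma$ forming a base for $G^\Sigma$, so that the setwise stabiliser $G_{B_1, \dots, B_b}$ is trivial (using the identification $G \cong G^\Sigma$ from the previous step). For each $i$, pick an arbitrary point $\alpha_i \in B_i \subseteq \Omega_0 \subseteq \Omega$. Any element $g \in G$ fixing every $\alpha_i$ in particular preserves each block $B_i$ setwise, so
\[
G_{\alpha_1, \dots, \alpha_b} \leq G_{B_1, \dots, B_b} = 1.
\]
Thus $\{\alpha_1, \dots, \alpha_b\}$ is a base for the faithful action $G^\Omega$, giving $b(G^\Omega) \leq b = b(G^\Sigma)$.

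There is really no serious obstacle here: the whole argument rests on the two uses of simplicity to guarantee faithfulness, plus the trivial observation that a point-stabiliser is contained in the stabiliser of its block. The only thing worth flagging is that maximality of $\Sigma$ as a block system is not actually needed for this lemma — it will presumably be used later to control $b(G^\Sigma)$ via known bounds on primitive actions of $G$, since $G^\Sigma$ is primitive by hypothesis. I would therefore write the proof as a short paragraph, being careful to state the faithfulness reduction once explicitly so that it can be cited in the lifting step, and being explicit that the lifted base lies in $\Omega$ (via $\Omega_0 \subseteq \Omega$) so that it genuinely bounds $b(G^\Omega)$ and not merely $b(G^{\Omega_0})$.
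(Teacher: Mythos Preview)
Your proof is correct and matches the paper's own argument essentially verbatim: establish faithfulness of $G$ on $\Sigma$ via simplicity, then lift a minimum base $\{\sigma_1,\dots,\sigma_b\}$ in $\Sigma$ to representatives $\alpha_i\in\sigma_i$ and use $G_{\alpha_1,\dots,\alpha_b}\leq G_{\sigma_1,\dots,\sigma_b}=1$. Your observation that maximality of $\Sigma$ is not used in the lemma itself is also accurate.
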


\begin{proof}
Since $|\Sigma|>1$ and $G$ is simple it follows that $G$ acts faithfully on $\Sigma$. Let $b=b(G^\Sigma)$. Then $G$ has a base $\{\sigma_1,\dots, \sigma_b\}$ in $\Sigma$, that is $G_{\sigma_1,\dots, \sigma_b}=1$. Let $\alpha_i\in\sigma_i$ for each $i$. Then $G_{\alpha_1,\dots, \alpha_b}$ fixes each of the $\sigma_i$ setwise, and hence 
$G_{\alpha_1,\dots, \alpha_b}\leq G_{\sigma_1,\dots, \sigma_b}=1$. It follows that $b(G^\Omega)\leq b$.  
\end{proof}

We may restate Wielandt's result \cite[Theorem 5.12]{W} as follows.

\begin{lemma}\label{l:W}
If $G$ is a finite group, and $G$ has a permutation representation on a set $\Omega$, such that $b(G^\Omega)=b$, then $G^{(b+1),\Omega}=G$.        
\end{lemma}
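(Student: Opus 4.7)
The plan is to fix a base $\{\alpha_1,\ldots,\alpha_b\}\subseteq\Omega$ for $G^\Omega$ and to show that every $h\in G^{(b+1),\Omega}$ coincides, as a permutation of $\Omega$, with some element of $G$. The main idea is to place the base in the first $b$ coordinates of a $(b+1)$-tuple and let the final coordinate range over all of $\Omega$; the $(b+1)$-closure condition, via Lemma~\ref{lem:obvious}, will then produce a candidate element of $G$ for each choice of final coordinate, and the base property will force all these candidates to coincide.

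Concretely, for each $\beta\in\Omega$ I would apply Lemma~\ref{lem:obvious} to the tuple $(\alpha_1,\ldots,\alpha_b,\beta)$ to obtain an element $g_\beta\in G$ satisfying $(\alpha_1,\ldots,\alpha_b,\beta)^{g_\beta}=(\alpha_1,\ldots,\alpha_b,\beta)^{h}$. Reading off the last coordinate yields $\beta^{g_\beta}=\beta^{h}$ for every $\beta\in\Omega$, so if I can show that $g_\beta$ is independent of $\beta$, the common value $g\in G$ will satisfy $\beta^g=\beta^h$ for all $\beta$, whence $h=g\in G$.

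The substantive step is proving this independence. For any $\beta,\beta'\in\Omega$, both $g_\beta$ and $g_{\beta'}$ agree with $h$ on each base point, and hence agree with each other there; so the element $g_\beta g_{\beta'}^{-1}\in G$ lies in the pointwise stabiliser $G_{\alpha_1,\ldots,\alpha_b}$. Because $\{\alpha_1,\ldots,\alpha_b\}$ is a base for $G^\Omega$, this stabiliser is trivial, forcing $g_\beta=g_{\beta'}$. I do not expect any genuine obstacle here: once one appreciates that padding an arbitrary point onto a base converts the $(b+1)$-closure condition into a pointwise identity between $h$ and a fixed element of $G$, the rest is bookkeeping using Lemma~\ref{lem:obvious} and the defining property of a base.
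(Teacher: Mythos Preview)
The paper does not give its own proof of this lemma: it simply restates \cite[Theorem~5.12]{W} and moves on. Your argument is the standard one and is correct. One small point worth making explicit: since the representation need not be faithful, the pointwise stabiliser $G_{\alpha_1,\ldots,\alpha_b}$ in $G$ may be nontrivial (it equals the kernel of the action), but what you actually need---and what the base condition gives---is that $g_\beta g_{\beta'}^{-1}$ acts trivially on $\Omega$, hence $g_\beta$ and $g_{\beta'}$ induce the same permutation of $\Omega$. With that reading, your conclusion $h=g\in G^\Omega$ is exactly the statement $G^{(b+1),\Omega}=G^\Omega$, which is what the lemma asserts under the paper's identification of $G$ with $G^\Omega$.
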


This has the following useful consequence.

\begin{corollary}\label{c:redn}
Let $G$ be a finite nonabelian simple group, and let $b_{maxprim}(G)$ denote the maximum of $b(G^\Omega)$ over each faithful primitive permutation representation of $G$ on a set $\Omega$. Then the closure number $k(G)$ is at most $b_{maxprim}(G)+1$. 
\end{corollary}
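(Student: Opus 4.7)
The corollary is a short deduction from the two preceding lemmas, so the plan is to stitch them together cleanly; there is no real obstacle to overcome, only a reduction to set up the application of Lemma~\ref{l:W}.

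Let $b := b_{maxprim}(G)$, and let $G$ act faithfully on an arbitrary set $\Omega$. The plan is to show that $b(G^\Omega)\leq b$, whence Lemma~\ref{l:W} gives $G^{(b+1),\Omega}=G$, and then since $\Omega$ was arbitrary, the definition of $k(G)$ yields $k(G)\leq b+1$.

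To bound $b(G^\Omega)$, first I note that, because $G$ is nontrivial and acts faithfully, there must be at least one $G$-orbit $\Omega_0\subseteq\Omega$ with $|\Omega_0|>1$. I would then let $\Sigma$ be a maximal block system for $G$ on $\Omega_0$, as allowed by the definition given just before Lemma~\ref{l:redn} (possibly the partition into singletons). Since $|\Sigma|>1$ and $G$ is a nonabelian simple group, the kernel of the action of $G$ on $\Sigma$ is a proper normal subgroup of $G$ and hence trivial, so $G$ acts faithfully on $\Sigma$. By construction this action is primitive, so $G^\Sigma$ is a faithful primitive permutation representation of $G$, and by the definition of $b_{maxprim}(G)$ we have $b(G^\Sigma)\leq b$.

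The final step is to invoke Lemma~\ref{l:redn} with the orbit $\Omega_0$ and its maximal block system $\Sigma$ to conclude that $b(G^\Omega)\leq b(G^\Sigma)\leq b$. Then Lemma~\ref{l:W} (with the role of $\Omega$ in that lemma played by the present $\Omega$) gives $G^{(b+1),\Omega}=G$, which is what I wanted. As remarked above, since the faithful $G$-set $\Omega$ was arbitrary, this shows $G$ is totally $(b+1)$-closed, so $k(G)\leq b_{maxprim}(G)+1$, completing the proof.
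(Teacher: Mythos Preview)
Your proof is correct and follows essentially the same approach as the paper: apply Lemma~\ref{l:redn} to bound $b(G^\Omega)$ by the base size of a faithful primitive representation, then invoke Lemma~\ref{l:W}. The only difference is that you spell out explicitly the choice of orbit $\Omega_0$ and maximal block system $\Sigma$, whereas the paper absorbs this into a one-line appeal to Lemma~\ref{l:redn}.
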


\begin{proof}
Let $G$ be faithfully represented on a set $\Omega$. Then, by Lemma~\ref{l:redn}, $b(G^\Omega)$ is at most the base size of some primitive representation of $G$, and hence  $b(G^\Omega)\leq b_{maxprim}(G)=b$, say. It follows from Lemma~\ref{l:W} that $G^{(b+1),\Omega}=G$. Hence, by  definition of the closure number, $k(G)\leq b+1$. 
\end{proof}

In the light of Corollary~\ref{c:redn}, we next summarise some recent results on base sizes of finite primitive permutation groups that are nonabelian simple groups. For several families of such actions  the base size is much larger than all others; these are called standard actions and are defined as follows. 

\begin{definition}\label{d:std}
Let $G$ be a finite nonabelian simple group. A primitive action of $G$ on a set $\Omega$
is \emph{standard} if, up to equivalence of actions, one of (a) or (b) holds for $G^\Omega$, and the action is non-standard otherwise.
\begin{enumerate}
\item[\upshape{(a)}] $G = A_n$ for some $n\geq5$, and $\Omega$ is an orbit of subsets or 
partitions of $\{1, \dots , n\}$; or

\item[\upshape{(b)}] $G$ is a classical simple group with natural
module $V$ over a field of characteristic $p$, so $G=\PSL(V), \PSp(V), \PSU(V)$ or $\POm^\varepsilon(V)$, and $G$ has a \emph{subspace action} on $\Omega$, that is to say, one of the following holds:
	\begin{enumerate}
	\item[\upshape{(i)}] for some positive integer $k \le \dim(V)/2$, $\Omega$ is a $G$-orbit of $k$-dimensional totally singular, nondegenerate, or (if G is orthogonal, $p = 2$ and $k = 1$) nonsingular subspaces of $V$; or
	\item[\upshape{(ii)}] $G = \PSp(V)$, $p=2$, and $\Omega$ is the set of nondegenerate quadratic forms on $V$ of fixed type $+$ or $-$ that polarise to the symplectic form preserved by $G$. 
	\end{enumerate}
\end{enumerate}
\end{definition}

If $G = \PSL(V)$ in case (b)(i) above, then $\Omega$ consists of all $k$-dimensional subspaces of $V$. Note also that, as the group $G$ is simple, the definition above is slightly simpler than that for a standard action of an almost simple group as given by Cameron and Kantor\footnote{In some cases (e.g., \cite[Definition 2.1]{B}), subspace actions are defined even more generally, allowing for certain additional actions of (non-simple) almost simple groups with socle $\PSp(4,q)'$ ($q$ even) or $\POm^+(8,q)$; see the proof of \cite[Proposition 8]{MR}.}in  \cite{CK}, where they conjectured that for each non-standard primitive action of an almost simple group $G$ on a finite set $\Omega$, the base size $b(G^\Omega)$ is bounded above by an absolute constant $c$.  
In \cite[Theorem 1.3]{LS}, Liebeck and Shalev proved this conjecture, but did not specify the constant $c$. In the case of a  non-standard primitive group $G$ with socle $A_n$, a sketch of a proof was given in \cite[Proposition 2.3]{CK} that $c=2$ should usually suffice. A detailed proof in \cite[Theorem 1.1 and Corollary 1.2]{BGS} showed that this is indeed true for all $n\geq 13$, and all possibilities for non-standard actions of $A_n$ and $S_n$ with base-size at least $3$ were given in \cite[Table 1]{BGS} (although the group $A_6$ appears in this table, the corresponding action is standard). Burness and others \cite{B,BLS,BOW} determined the best possible value for $c$ such that $b(G^\Omega)\leq c$ for all other non-standard primitive actions of almost simple groups $G$. In particular, upper bounds for $b(G^\Omega)$ were specified in \cite[Theorem 1.1, Proposition 4.1]{B} in the case where $G^\Omega$ is a non-standard action of an almost simple classical group. Thus we obtain the following.

\begin{lemma}\label{l:bs1}
Let $G$ be a finite nonabelian simple group, and suppose that $G$ has a non-standard primitive action on a finite set $\Omega$ with $|\Omega|>1$. Then the following statements hold.
\begin{enumerate}
\item[\upshape{(i)}] $b(G^\Omega)\leq 7$, with equality if and only if $G=M_{24}$ in its $5$-transitive action of degree $24$.
\item[\upshape{(ii)}] If $G=A_n$ for some $n\geq5$, then either $b(G^\Omega)< n-2$ or $(n, |\Omega|, b(G^\Omega))=(5,6,3)$.
\item[\upshape{(iii)}] If $G$ is a classical group, then $b(G^\Omega) \le 4$.
\item[\upshape{(iv)}] If $b(G^\Omega) = 4$ and $G$ is equal to $\PSL(n,q)$, $\PSU(n,q)$ or $\PSp(n,q)$ with $n \le 5$, then $G = \PSU(4,3)$.
\end{enumerate}
\end{lemma}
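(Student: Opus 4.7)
The plan is to invoke, case by case on the family of the finite simple group $G$, the various published classifications of base sizes of non-standard primitive actions of almost simple groups that are surveyed in the paragraph preceding the lemma, and to read off (i)--(iv) from the resulting tables of exceptions. For sporadic $G$, every primitive action is non-standard by Definition~\ref{d:std}, so it suffices to cite \cite{BOW}, which tabulates $b(G^\Omega)$ for every primitive action of every almost simple sporadic group. Scanning those tables, the bound $7$ is attained exactly once, by the $5$-transitive action of $M_{24}$ of degree $24$, while every other sporadic entry satisfies $b(G^\Omega) \leq 6$; this gives the equality case in (i).

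For $G = A_n$, I would cite \cite[Theorem 1.1, Corollary 1.2 and Table 1]{BGS}. For $n \geq 13$, every non-standard primitive action of an almost simple group with socle $A_n$ has base size at most $2$, hence strictly less than $n-2$ and less than $7$. For $5 \leq n \leq 12$, the finite list of exceptional non-standard actions with base size at least $3$ is given in Table 1 of \cite{BGS}; in each entry the base size is strictly less than $n-2$, except for $A_5$ acting on the $6$ cosets of a subgroup of index $6$, where $b(G^\Omega) = 3 = n-2$. This gives (ii), and is consistent with (i).

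For classical $G$, I would cite \cite[Theorem 1.1 and Proposition 4.1]{B}, which gives $b(G^\Omega) \leq 5$ for every non-standard primitive action of an almost simple classical group, together with an explicit list of the exceptional cases where $b(G^\Omega) \in \{4,5\}$. Restricting that list to the simple group $G$ eliminates all the $5$-cases, giving (iii); and among the low-rank families $\PSL(n,q)$, $\PSU(n,q)$, $\PSp(n,q)$ with $n \leq 5$, only $G = \PSU(4,3)$ survives with $b(G^\Omega) = 4$, giving (iv). Finally, for exceptional $G$ of Lie type, every primitive action is non-standard by Definition~\ref{d:std}, and \cite{B2018} yields $b(G^\Omega) \leq 6 < 7$, consistent with (i).

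The main effort is bookkeeping rather than any new structural argument: one must confirm that the tables of exceptions in \cite{B}, \cite{BGS} and \cite{BOW}, which are stated for almost simple groups, descend correctly to the simple groups themselves, and that no non-standard primitive action of an almost simple group with socle $G$ is missing from those tables with base size exceeding the claimed bounds. The most delicate step is (iv), which requires a careful pass through the low-dimensional classical exceptions in \cite{B} to verify that only $\PSU(4,3)$ survives the restriction to simple groups.
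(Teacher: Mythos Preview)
Your proposal is correct and matches the paper's approach: the paper does not give a separate proof of this lemma but presents it as a direct consequence of the literature surveyed in the preceding paragraph, citing \cite{BGS} for alternating groups, \cite{B,BLS,BOW} for the remaining families, and in particular \cite[Theorem~1.1, Proposition~4.1]{B} for classical groups. The only minor discrepancy is that for exceptional groups the paper's survey paragraph points to \cite{BLS} rather than \cite{B2018}; either reference suffices for the bound $b(G^\Omega)\le 6$ needed here.
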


We are now in a position to prove part of Theorem \ref{t:rest}.

\medskip
\noindent \emph{Proof of Theorem \ref{t:rest}(b).}
 Let $G$ be an exceptional group of Lie type. Then all primitive actions of $G$ are non-standard and so by Lemma \ref{l:bs1}, $b_{maxprim}(G)\leqslant 6$. Hence Corollary \ref{c:redn} implies that $k(G)\leqslant 7$, as required.
 \hfill\qedsymbol




\subsection{Bases for alternating groups}
For the actions of $A_n$ or $S_n$ on $k$-subsets, the best general bounds have been obtained by Halasi in \cite{H}. To extract the information we need from his work it is simplest to use his monotonicity result from \cite[Section 2]{H}.

\begin{lemma}\label{l:ksets}
Let $G=A_n$ or $S_n$, where $n\geq5$ and $n=3m+\delta$ with $\delta\in\{0,1,2\}$. Consider the action of $G$ on the set $\Omega$ of $k$-subsets of $X=\{1,2,\dots, n\}$, where $2\leq k\leq n/2$. Then $b(G^\Omega)\leq 2m$ if $\delta\in\{0,1\}$ and $b(G^\Omega)\leq 2m+1$ if $\delta=2$; so in particular $b(G^\Omega)\leq 2n/3$. Moreover, $b(G^\Omega)\leq n-2$, with equality if and only if 
$(n,k, G)=(6,2, S_6)$ or $(5,2, S_5)$. 
\end{lemma}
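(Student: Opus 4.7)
The plan is to reduce the claim to the base size of $S_n$ acting on $2$-subsets, and then compute that directly via a graph-theoretic argument. Since $A_n \le S_n$, any base for the $S_n$-action on $\Omega$ is also a base for the $A_n$-action, so $b(A_n^\Omega) \le b(S_n^\Omega)$ and it suffices to consider $G = S_n$. The $S_n$-action on $k$-subsets is equivalent to that on $(n-k)$-subsets, so $k \le n/2$ is inherent; invoking Halasi's monotonicity from \cite[Section 2]{H}, I would then reduce the general-$k$ case to $k = 2$ by showing $b(S_n^\Omega) \le b(S_n^{\Omega'})$ for $2 \le k \le n/2$, where $\Omega'$ is the set of $2$-subsets.

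For $k = 2$, a collection $\mathcal{P}$ of pairs is a base if and only if the only element of $S_n$ fixing each pair in $\mathcal{P}$ setwise is the identity. Viewing $\mathcal{P}$ as the edge set of a graph $\Gamma$ on $\{1, \ldots, n\}$, an elementary case analysis (within any connected subgraph on at least $3$ vertices, any degree-$\ge 2$ vertex must be fixed, then so are its neighbours, and so on) shows $\mathcal{P}$ is a base if and only if $\Gamma$ has at most one isolated vertex and no connected component equal to $K_2$. I would then minimise $|E(\Gamma)|$ under these constraints by partitioning the vertex set (possibly leaving one isolated vertex) into tree components of size $3$, with a single size-$4$ piece when $\delta = 2$, giving $2m$ edges for $\delta \in \{0, 1\}$ and $2m + 1$ edges for $\delta = 2$. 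A matching lower bound follows from the constraint that each non-trivial component of size $v$ requires $v \ge 3$ and at least $v - 1$ edges, combined with the at-most-one-isolated-vertex rule.

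For the final assertion, $2m$ and $2m + 1$ are each at most $2n/3$, and $2n/3 \le n - 2$ iff $n \ge 6$ (strictly for $n \ge 7$), so the bound $b(G^\Omega) \le n - 2$ follows, with equality possible only when $n \in \{5, 6\}$. A case-by-case check confirms equality precisely for $(n, k, G) = (5, 2, S_5)$ and $(6, 2, S_6)$: for $(5, 2, A_5)$, the pairs $\{\{1, 2\}, \{2, 3\}\}$ form a base of size $2$ (their $S_5$-stabiliser is $\langle(4\,5)\rangle$, which meets $A_5$ trivially); and for $(6, 2, A_6)$ and $(6, 3, G)$ with $G \in \{S_6, A_6\}$, explicit bases of size $3$ can be exhibited. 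The main obstacle will be the graph-theoretic step: verifying both directions of the structural characterisation of bases for $k = 2$, and pinning down the exact minimum edge count in each congruence class of $n$ modulo $3$; the reduction to $k = 2$ via Halasi's monotonicity is routine once the relevant statement is located.
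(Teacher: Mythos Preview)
Your proposal is correct and follows the same overall strategy as the paper: reduce to $G=S_n$, invoke Halasi's monotonicity to reduce to $k=2$, and then analyse the $k=2$ case explicitly. The difference lies in how the $k=2$ case is handled. The paper simply writes down the explicit base $B=\{\{3j+1,3j+2\},\{3j+2,3j+3\}\mid 0\le j\le m-1\}$ (adding $\{1,n\}$ when $\delta=2$) and checks that its pointwise stabiliser is trivial; for the equality cases it then cites Halasi's Theorems~3.1 and~3.2 to get $f(6,3)=3$ and $f(6,2)\ge 4$. Your graph-theoretic characterisation (base $\Leftrightarrow$ at most one isolated vertex and no $K_2$ component) is more elaborate but yields the exact value of $b(S_n^{\Omega'})$ for $k=2$ in every congruence class, so the equality analysis at $n\in\{5,6\}$ becomes self-contained rather than requiring further citations. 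The explicit base the paper constructs is, of course, exactly one of the optimal graphs your minimisation produces (paths on the triples, with one size-$4$ tree and one isolated vertex when $\delta=2$), so the two arguments converge. Your route is slightly longer but buys independence from \cite[Theorems 3.1--3.2]{H}; the paper's is shorter but leans on those results for the lower bounds at $n=6$.
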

 
\begin{proof}
Since $b(A_n^\Omega)\leq b(S_n^\Omega)$, we may assume that $G=S_n$ for proving the main assertion.
As in \cite{H}, let $f(n,k)$ denote the (minimum) base size of the action of $S_n$ on the set of $k$-subsets of $X$. By \cite[Corollary 2.2]{H}, $f(n,k)\leq f(n,2)$, so we may assume further that $k=2$. Write $n=3m+\delta$ as in the statement, and  consider the following subset of $\Omega$:
\[
B=\{\{ 3j+1, 3j+2\}, \{ 3j+2, 3j+3\} \mid 0\leq j\leq m-1\}.
\]
Let $H$ be the pointwise stabiliser of $B$ in $G$. Then $H$ fixes each point $i$ of $X$ such that $1\leq i\leq 3m$. Suppose first that $\delta=0$ or $1$ (so that $n \ge 6$). Then $H=1$ and hence $b(G^\Omega)\leq |B|=2m\leq 2n/3\leq n-2$. Further if $b(G^\Omega)= n-2$ then $2n/3 = n-2$, 
and hence $n=6$ and $b(G^\Omega)= 4$.  By \cite[Theorem 3.1]{H}, $f(6,3)=\lceil \log_2 6\rceil = 3$, which is less than  $b(G^\Omega)= 4$, and hence $k=2$. Also, by   
\cite[Theorem 3.2]{H}, $f(6,2)\geq \lceil (2n-2)/(k+1)\rceil = 4$, and it follows that $b(S_6^\Omega)= 4$. However $b(A_6^\Omega)= 3$ since the pairs $\{1,2\}, \{1,3\}, \{1,4\}$ form a base. Thus the result holds if $\delta\leq 1$.

 Suppose now that $\delta=2$. Then $H = \langle (n-1, n)\rangle$, and the pointwise stabiliser of $B\cup\{ \{1, n\}\}$ is trivial. Hence  $b(G^\Omega)\leq |B|+1=2m+1$ and we have both $2m+1<2n/3$ and  $2m+1\leq 3m = n-2$. In this case, if  $b(G^\Omega)= n-2$, then $2m+1 = 3m$, 
 so $n=5$. 
The proof is complete on noting that $b(S_5^\Omega)= 3$ while $b(A_5^\Omega)= 2$.
\end{proof}

Similarly for the base sizes of the actions of $A_n$ or $S_n$ on partitions,  precise results have been obtained by Burness, Garonzi and Lucchini in \cite{BGL} for small part sizes, building on the work of Benbenishty, Cohen and Niemeyer \cite{BCN},  and by Morris and Spiga in \cite{MS} for large part sizes. Again, the information we require is most easily obtained  for most cases from earlier work, this time \cite{BGL} and an argument in  Liebeck's 1984 paper \cite{L}. 

\begin{lemma}\label{l:partns}
Let $G=A_n$ or $S_n$, where $n\geq5$ and $n=ab$ with $a\geq2, b\geq2$. Consider the action of $G$ on the set $\Omega$ of partitions of $X=\{1,2,\dots, n\}$ with $b$ parts of size $a$.  Then $b(G^\Omega)\leq n-2$ with equality if and only if 
$(a,b, G)=(2,3, S_6)$ or $(a,b,G)=(3,2,S_6)$.
\end{lemma}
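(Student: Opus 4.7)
The proof parallels that of Lemma \ref{l:ksets}. Since $b(A_n^\Omega) \leq b(S_n^\Omega)$, it suffices to establish the bound $b(S_n^\Omega) \leq n-2$ and then verify the equality characterization separately. Note that $a, b \geq 2$ together with $n \geq 5$ forces $n \in \{6, 8, 9, 10, 12, \ldots\}$, since $5, 7, 11$ are prime.

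For the main upper bound, I would invoke the explicit base size results of \cite{BGL} and \cite{BCN} for most ranges of $(a, b)$, together with Liebeck's argument from \cite[Section 2]{L} and the bounds of \cite{MS} for large part sizes. The underlying construction is as follows: starting from the standard uniform partition $\pi_0 = \{\{1,\ldots,a\}, \{a+1,\ldots,2a\},\ldots, \{(b-1)a+1,\ldots,n\}\}$, whose stabiliser in $S_n$ is the imprimitive wreath product $S_a \wr S_b$ of order $(a!)^b b!$, one appends further uniform partitions that successively refine the joint stabiliser. A small number (logarithmic in $n$) of appropriately chosen partitions suffices to reduce the joint stabiliser to the trivial group, so $b(S_n^\Omega) = O(\log n)$, which is well below $n - 2$ for all admissible $n \geq 8$.

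The only remaining case is $n = 6$, giving the two possibilities $(a, b) = (2, 3)$ (partitions into three pairs) and $(a, b) = (3, 2)$ (partitions into two triples). For $(a, b, G) = (2, 3, S_6)$, the action of $S_6$ on the 15 uniform partitions is equivalent, via the outer automorphism of $S_6$, to the natural action on the 15 two-element subsets of $\{1, \ldots, 6\}$; by Lemma \ref{l:ksets} its base size is exactly $4 = n - 2$. For $(a, b, G) = (3, 2, S_6)$, the stabiliser of a partition is $S_3 \wr S_2$ of order $72$, and a direct verification (or the corresponding calculation in \cite{BGL}) gives $b(S_6^\Omega) = 4$. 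For $A_6$ in both cases, I would exhibit an explicit $3$-element base, yielding the strict inequality.

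The main obstacle will be the equality characterization: while the bound $b(G^\Omega) \leq n - 2$ is routine, proving strict inequality for every $(a, b, G)$ other than the two exceptional $S_6$ actions requires either applying sufficiently sharp bounds from \cite{BGL, MS} or constructing explicit bases of size at most $n - 3$ in the small residual cases (most notably $n \in \{8, 9, 10\}$), together with the verification that the two $A_6$ actions at $n = 6$ indeed admit 3-element bases.
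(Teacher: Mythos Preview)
Your outline is broadly correct and would eventually succeed, but the paper organises the argument more efficiently and avoids the residual small-case analysis that you flag as an obstacle. Rather than appealing to an asymptotic $O(\log n)$ bound and then treating $n\in\{8,9,10\}$ separately, the paper splits only on whether $a\geq 3$ or $a=2$. For $a\geq 3$ it quotes Liebeck's explicit inequality $b(S_n^\Omega)\leq (a-1)(b-1)+2 = ab-a-b+3$ from \cite{L}; since $ab-a-b+3\leq ab-2$ with equality precisely when $(a,b)=(3,2)$, this single inequality handles both the upper bound and the equality characterisation in one stroke, with no residual cases. For $a=2$ the paper invokes the exact values from \cite{BGL}: $b(S_n^\Omega)=3$ when $b\geq 4$ and $b(S_n^\Omega)=4$ when $b=3$. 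The two $A_6$ cases are then settled by citing \cite{MS} and \cite[Remark~2.8]{BGL} rather than by constructing explicit bases. Your use of the outer automorphism of $S_6$ to link the $(2,3)$ partition action to the $2$-subset action is a nice touch not present in the paper, but it is not needed once one has the exact values from \cite{BGL} and \cite{MS}.
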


\begin{proof}
Since $b(A_n^\Omega)\leq b(S_n^\Omega)$, we may assume that $G=S_n$ for proving the main assertion. If $a\geq3$ then in \cite[Case (i) on pp. 13--14]{L}, Liebeck proved that $b(G^\Omega)\leq (a-1)(b-1)+2=ab-a-b+3\leqslant n-2$,  where the second equality is strict unless $(a,b)=(3,2)$. In this case we have that \cite[Theorems 1.1 and 1.2]{MS} $b(S_n^\Omega)=4=n-2$ while $b(A_n^\Omega)=3<n-2$.  
Thus we may assume that $a=2$, so $b\geq3$. In this case, by \cite[Theorem 2]{BGL}, the base size satisfies
$b(S_n^\Omega)=3<  n-2$ if $b\geq4$, and $b(S_n^\Omega)=4=  n-2$ if $b=3$.
Further,  $b(A_n^\Omega)=3<  n-2$ if $b=3$, see \cite[Remark 2.8]{BGL}.
\end{proof}

Finally, we prove the following lemma, which completes the proof of Theorem \ref{t:An}.

\begin{lemma}\label{p:An}
Let $G=A_n$ where $n\geq5$, and suppose that $G$ has a faithful primitive permutation representation on a finite set $\Omega$. Then $b(G^\Omega)\leq n-2$, with equality if and only if either $|\Omega|=n$ or $(n, |\Omega|, b(G^\Omega))=(5,6,3)$.  Moreover the assertions of Theorem~\ref{t:An} are valid.
\end{lemma}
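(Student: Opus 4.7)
The plan is to first establish the base size bound $b(G^\Omega)\le n-2$ for every faithful primitive action of $G = A_n$, together with the stated equality characterization, and then to deduce Theorem~\ref{t:An} by reducing arbitrary faithful actions to the primitive setting via Lemma~\ref{l:redn} and applying Lemma~\ref{l:W}.

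For the primitive case, I would split the faithful primitive actions of $G$ into standard and non-standard actions according to Definition~\ref{d:std}. For non-standard actions, Lemma~\ref{l:bs1}(ii) immediately gives either $b(G^\Omega) < n-2$ or the single exceptional triple $(n,|\Omega|,b(G^\Omega))=(5,6,3)$, corresponding to $A_5 \cong \PSL(2,5)$ acting on the six cosets of a dihedral subgroup; since $n-2 = 3$ in this case, equality of the stated form holds. For standard actions, there are two families to inspect: orbits of $k$-subsets and orbits of uniform partitions of $\{1,\ldots,n\}$. When $k = 1$, the action is the natural one on $|\Omega| = n$ points and trivially has base size $n-2$. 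When $k \ge 2$, I would invoke Lemma~\ref{l:ksets} to obtain $b(A_n^\Omega) \le b(S_n^\Omega) \le n-2$; the equality cases $(6,2,S_6)$ and $(5,2,S_5)$ identified there involve only $S_n$, and the proof of that lemma explicitly records $b(A_6^\Omega) = 3 < 4$ and $b(A_5^\Omega) = 2 < 3$, so $A_n$ never attains the bound on $k$-subsets with $k \ge 2$. The partition case is handled analogously via Lemma~\ref{l:partns}, whose exceptional equalities again involve only $S_n$. This completes the characterization of primitive $A_n$-actions with $b(G^\Omega) = n-2$.

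For the second assertion, I would reason as follows. Let $G = A_n$ act faithfully on an arbitrary set $\Omega$; since $G$ is nonabelian simple, some orbit $\Omega_0$ has $|\Omega_0| > 1$, and any maximal block system $\Sigma$ on $\Omega_0$ supports a faithful primitive $G$-action (as $G$ is simple and $|\Sigma| > 1$). The primitive bound just proved, combined with Lemma~\ref{l:redn}, yields $b(G^\Omega) \le b(G^\Sigma) \le n-2$, and Lemma~\ref{l:W} then gives $G^{(n-1),\Omega} = G$. Hence $k(A_n) \le n-1$. The matching lower bound is the one already sketched in the introduction: in the natural action $A_n$ is $(n-2)$-transitive, so for every $k \le n-2$ the groups $A_n$ and $\Sym(\{1,\ldots,n\})$ have identical orbits on $k$-tuples, forcing $G^{(k),\Omega} = \Sym(\Omega) > G$ and therefore $k(A_n) \ge n-1$. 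Combining the two bounds gives $k(A_n) = n-1$.

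The main obstacle I foresee is the careful bookkeeping of equality cases: each of Lemmas~\ref{l:bs1}, \ref{l:ksets}, and \ref{l:partns} carries its own list of exceptional configurations, and to match the stated ``if and only if'' precisely I must distinguish those exceptions that arise only for $S_n$ (and therefore do not yield equality for $A_n$) from the genuine $A_n$ exception $(5,6,3)$ and the natural action with $|\Omega|=n$.
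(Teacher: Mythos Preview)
Your proposal is correct and follows essentially the same approach as the paper: split the primitive actions into standard and non-standard, handle the latter via Lemma~\ref{l:bs1}(ii) and the former via Lemmas~\ref{l:ksets} and~\ref{l:partns} (with the natural action as the $k=1$ case), then deduce Theorem~\ref{t:An} by reducing arbitrary faithful actions to primitive ones and invoking the base-size/closure link. The only cosmetic difference is that the paper packages the last step as an appeal to Corollary~\ref{c:redn}, whereas you unwind that corollary into its constituent Lemmas~\ref{l:redn} and~\ref{l:W}.
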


  \begin{proof}
If $|\Omega|=n$ then the $G$-action on $\Omega$ is permutationally isomorphic to its natural action and $b(G^\Omega)=n-2$. Suppose this is not the case.
If the primitive permutation group $G^\Omega$ is standard, as in Definition~\ref{d:std}(a), then by Lemmas~\ref{l:ksets} and~\ref{l:partns}, 
$b(G^\Omega)<n-2$. On the other hand if $G^\Omega$ is non-standard, then by Lemma~\ref{l:bs1}, either $b(G^\Omega)< n-2$ or $(n, |\Omega|, b(G^\Omega))=(5,6,3)$. This proves the first assertion.

Now fix $n\geq5$, and let $\Omega$ be such that $G=A_n$ has a faithful primitive action on $\Omega$ and $b(G^\Omega)$ is maximal over all such actions, that is, $b(G^\Omega) = b_{maxprim}(G)$. By Corollary~\ref{c:redn}, the closure number $k(G)$ satisfies $k(G)\leq b(G^\Omega)+1$, and we have just proved that $b(G^\Omega)\leq n-2$. Thus $k(G)\leq n-1$. On the other hand, as we noted in the introduction, for the natural action of $G$ on $\Omega=\{1,\dots,n\}$,  $G$ and $S_n$ have the same orbits on $\Omega^{n-2}$, and hence $G^{(n-2),\Omega}=S_n\ne G$. Thus $k(G)>n-2$, and hence $k(G)=n-1$.  This proves Theorem~\ref{t:An}.
  \end{proof}

\section{Bases for simple classical groups}
\label{subsec:classical}

In this subsection, we consider in detail base sizes of finite simple classical groups $G$ that are not isomorphic to alternating groups. We are concerned with
\begin{equation}\label{e:bmaxprim}
    \mbox{  $b_{maxprim}(G):=\max\{b(G^\Omega) \mid G \text{ acts faithfully and primitively on the set } \Omega\}$.}
\end{equation}
In particular, we prove the following theorem, which gives lower and upper bounds for  $b_{maxprim}(G)$.
 Together with Corollary~\ref{c:redn}, these bounds allow us to prove Theorem~\ref{t:rest}(c). The bounds depend on certain quantities which we define as follows.
 
\begin{equation}\label{e:O}
\mathcal{O}:=\{ \mathrm{P}\Om^\varepsilon(n,q)\mid \mbox{$n$ even, $\varepsilon=\pm$, $q \le 3$, and if $q = 3$, then $n\equiv 1-\varepsilon.1\pmod{4}\}$}
\end{equation} 
 
\begin{equation}\label{e:abc}
 \alpha:=\left\{\begin{array}{ll}
      1&\mbox{if $(q+1) \mid n$}  \\
      0&\mbox{otherwise,} 
 \end{array}\right. 
 \ 
 \beta:=\left\{\begin{array}{ll}
      1&\mbox{if $G\in\mathcal{O}$}  \\
      0&\mbox{otherwise,} 
 \end{array}\right. 
 \ 
  \gamma:=\left\{\begin{array}{ll}
      1&\mbox{if $q$ even, $n \in \{10,12,14,16\}$}  \\
      0&\mbox{otherwise.} 
 \end{array}\right. 
\end{equation}

\begin{table}[ht]
\centering
\renewcommand{\arraystretch}{1.2}
\caption{Lower and upper bounds for $b_{maxprim}(G)$, for finite simple classical groups $G$ not isomorphic to alternating groups, with $\alpha,\beta,\gamma$ as in \eqref{e:abc}, and $\delta$ the Kronecker delta.}
\label{tab:maxprim}
\begin{tabular}{ ccc }
\hline
$G$ & Lower bound & Upper bound \\
\hline
\hline
$\PSL(n,q)$ & $n+1-\delta_{2q}$ & $n+1-\delta_{2q}$\\
\hline
$\PSU(4,q)$ & $4-\alpha$ & $5$\\
$\PSU(6,q)$ & $6-\alpha$ & $12$\\
$\PSU(n,q)$, $7 \le n \le 15$ & $n-\alpha$ & $\lfloor n/3 + 11 \rfloor$\\
$\PSU(n,q)$, $n \in \{3,5\}$ or $n \ge 16$ & $n-\alpha$ & $n$\\
\hline
$\PSp(4,q)$ & $4$ & $5$\\
$\PSp(n,q)$, $6 \le n \le 12$ & $n$ & $\lfloor n/3 + 10 \rfloor$\\
$\PSp(n,q)$, $n \ge 14$ & $n$ & $n$\\
\hline
$\POm^\varepsilon(n,q)$, $7 \le n \le 16$ & $n-1-\beta$ & $\lfloor n/3 + 11-\gamma \rfloor$\\
$\POm^\varepsilon(n,q)$, $n \ge 17$ & $n-1-\beta$ & $n-1$\\
\hline
\end{tabular}
\end{table}


\begin{theorem}
\label{t:bsub}
Let $G$ be a finite simple classical group that is not isomorphic to an alternating group. Then Table~\ref{tab:maxprim} gives a lower and upper bound for $b_{maxprim}(G)$.
\end{theorem}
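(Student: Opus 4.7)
The strategy is to establish the lower and upper bounds separately, exploiting the dichotomy of primitive actions of a finite simple classical group $G$ into standard (subspace) actions as in Definition~\ref{d:std}(b) and non-standard actions.  By Lemma~\ref{l:bs1}(iii), every non-standard primitive action of $G$ has base size at most~$4$, while the lower bounds in Table~\ref{tab:maxprim} are almost always at least~$4$.  Hence $b_{maxprim}(G)$ is attained on a subspace action in essentially every case, and the task reduces to base size estimates for the subspace actions enumerated in Definition~\ref{d:std}(b).

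For the lower bounds I would exhibit one favourable subspace action in each row.  For $\PSL(n,q)$, the natural action on $\PG(n-1,q)$ realises $n+1-\delta_{2q}$: a direct diagonal-torus calculation shows that the pointwise stabiliser in $\PSL$ of any basis of $V$ is trivial precisely when $q=2$, so that one additional base point is needed in all remaining cases.  For $\PSU(n,q)$ I would take the action on nondegenerate or totally singular $1$-spaces and carry out the analogous stabiliser calculation; the collapse of an extra diagonal factor exactly when $(q+1)\mid n$ produces the value $n-\alpha$.  For $\PSp(n,q)$ the natural action on $\PG(n-1,q)$ gives $n$, and for $\POm^\varepsilon(n,q)$ the action on a suitable orbit of hyperplanes (or, in characteristic~$2$, nonsingular $1$-spaces) gives $n-1$, which must be decreased by $\beta$ in the exceptional configurations listed in~\eqref{e:O}.

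For the upper bounds I would bound the base size of every primitive action from above.  Non-standard actions contribute at most~$4$ by Lemma~\ref{l:bs1}(iii), so the estimate amounts to a uniform bound over the subspace actions in Definition~\ref{d:std}(b).  For $\PSL(n,q)$ the direct computation matches the lower bound.  For the unitary, symplectic and orthogonal families I would combine the dimension-by-dimension estimates of Burness and collaborators on subspace actions of finite simple classical groups with the asymptotic results of Halasi and of Mecenero--Spiga, and take the maximum over the subspace-stabiliser types.  The dichotomy of quadratic-form orbits in $\PSp(n,q)$ with $q$ even and $n\in\{10,12,14,16\}$ accounts for the correction~$\gamma$.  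The bounds $n$, $n+1$ and $n-1$ emerge once $n$ is sufficiently large, while in the low-dimensional ranges one must settle for the coarser estimate $\lfloor n/3+c\rfloor$ arising from the general asymptotic bounds before the sharper dimension-specific ones take over.

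The principal obstacles will be the number of subspace-stabiliser types to be examined and the need to sharpen the general asymptotic estimates in the low-dimensional ranges $n\leq 16$ so as to reach the displayed bounds.  Further care is needed in the small-rank families $\PSU(3,q),\PSU(4,q),\PSp(4,q),\POm^\varepsilon(8,q)$, where exceptional isomorphisms can identify apparently distinct subspace actions, and where the equality case $\PSU(4,3)$ from Lemma~\ref{l:bs1}(iv) must be inspected individually to confirm it does not force a larger lower bound than stated.
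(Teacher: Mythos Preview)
Your overall strategy coincides with the paper's: lower bounds come from the primitive action on $1$-dimensional subspaces (the paper cites \cite{F} for these rather than redoing the stabiliser computations), and upper bounds are obtained by combining Lemma~\ref{l:bs1}(iii) for non-standard actions with uniform bounds on subspace actions---specifically the $k\le 2$ bounds collected in Lemma~\ref{l:lowtab} (from Moscatiello--Roney-Dougal \cite{MR}, not ``Mecenero--Spiga'') and the $n/k+11$ bound of Lemma~\ref{l:sublarge} (from Halasi--Liebeck--Mar\'oti \cite{HLM}) for $k\ge 3$.

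There is one concrete error in your account. The correction $\gamma$ appears only in the \emph{orthogonal} row of Table~\ref{tab:maxprim}, not the symplectic one, and it has nothing to do with the quadratic-form action of $\PSp(n,q)$. It arises instead from the refinement in Lemma~\ref{l:sublarge} that gives $b(G^\Omega)\le n/k+10$ for $\POm^\pm(n,q)$ when $k$ is odd and $q$ is even; applied at $k=3$ this yields the saving of $1$ in the bound $\lfloor n/3+11-\gamma\rfloor$ for $n\in\{10,12,14,16\}$ with $q$ even. A second point to tighten: the paper distinguishes carefully between $b_{maxsub}(G)$ (subspace actions of $G$ itself) and $b_{maxstand}(G)$ (subspace actions of any classical group isomorphic to $G$), and checks in each case that the exceptional isomorphisms such as $\PSL(4,q)\cong\POm^+(6,q)$ or $\PSU(4,q)\cong\POm^-(6,q)$ do not introduce extra standard actions with larger base size. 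Your mention of exceptional isomorphisms gestures at this, but the case-by-case verification needs to be carried out explicitly in each family.
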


In order to prove this theorem, it will be necessary to consider primitive subspace actions $G^\Omega$, as in Definition~\ref{d:std}. The next result, Lemma~\ref{l:sublarge}, summarises results from \cite{HLM}.  Note that if $\Omega$ is a $G$-orbit of $k$-dimensional subspaces of the natural module $V$ for $G$, with $n:=\dim(V)$, then (up to equivalence of actions) $1 \le k \le n/2$. In fact, if $G = \POm^-(n,q)$, then $n/2-1$ is the maximum dimension of a totally singular subspace of $V$. On the other hand, if $G \ne \POm^-(n,q)$ or if $4 \nmid n$, then $G$ does not act primitively on any set of $n/2$-dimensional nondegenerate subspaces of $V$ (see \cite[Table 2.2]{BHR}).

\begin{lemma}
\label{l:sublarge}
Let $G$ be a finite simple  classical group with natural module $V$ of dimension $n$. Additionally, let $\Omega$ be a set of $k$-dimensional nondegenerate or totally singular subspaces of $V$, such that $G^\Omega$ is a primitive subspace action. Then $b(G^\Omega) \le n/k+11$. Furthermore, $b(G^\Omega) \le n/k+10$ if $G = \PSp(n,q)$ and $k$ is odd; if $G = \POm^\pm(n,q)$, $k$ is odd and $q$ is even; or if $k = n/2$ and either $G \ne \POm^-(n,q)$ or $4 \nmid n$.
\end{lemma}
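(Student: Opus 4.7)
The plan is to derive Lemma~\ref{l:sublarge} as a direct consequence of the main theorems of \cite{HLM}, which establish explicit upper bounds on base sizes for finite simple classical groups acting on orbits of totally singular or nondegenerate subspaces of their natural module. The role of the present lemma is to package those bounds in the uniform shape $b(G^\Omega) \le n/k + c$ that is needed in the next section, with $c = 11$ in general and $c = 10$ in three specific refined cases.

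First I would invoke the principal bound from \cite{HLM} to obtain the universal estimate $b(G^\Omega) \le n/k + 11$. The argument in \cite{HLM} is probabilistic in flavour: one bounds the proportion of elements of $\Omega$ fixed by each nonidentity element of a point stabiliser, and then chooses approximately $n/k$ generic $k$-dimensional subspaces, together with a bounded number of additional subspaces, to kill off any residual automorphisms of the resulting configuration. For the three refined cases, namely (i) $G = \PSp(n,q)$ with $k$ odd, (ii) $G = \POm^\pm(n,q)$ with $k$ odd and $q$ even, and (iii) $k = n/2$ provided either $G \neq \POm^-(n,q)$ or $4 \nmid n$, the improvement by one comes from special features of the stabiliser structure. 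In (i) and (ii) the parity of $k$ interacts with the form so as to remove one automorphism from the residual group fixing a generic flag; in (iii) the natural involution interchanging a maximal totally singular subspace with a chosen complement (available precisely when primitivity is not spoiled, cf.\ \cite[Table 2.2]{BHR}) gives an analogous saving.

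The main obstacle will be verifying that the statements in \cite{HLM} match the claimed bounds exactly, in particular that the refined constant is $10$ and not merely $11$ in the three listed cases, and that the small-dimensional or small-$q$ exceptions of the asymptotic analysis in \cite{HLM} still satisfy the stated inequality. For any such exceptional parameter values I would fall back on Burness's explicit tables of base sizes for non-standard actions \cite{B} and on direct computation in low dimensions, which between them cover every possibility left open by the asymptotic bounds. Once the dictionary between the HLM statements and the present lemma is in place, the result follows by a short case split on the type of $G$, the parity of $k$, the parity of $q$, and whether $k = n/2$.
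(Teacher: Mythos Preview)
Your overall strategy of citing \cite{HLM} is right, and the general bound $b(G^\Omega)\le n/k+11$ is indeed exactly \cite[Theorem~3.3]{HLM}. But your explanation for the refined bound $n/k+10$ is incorrect, and the mechanisms you propose (parity removing an automorphism from a residual group, an involution swapping a maximal totally singular subspace with a complement) are not what is going on.

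The actual reason is much simpler and purely structural: in each of the three refined cases, the hypotheses force $\Omega$ to consist of \emph{totally singular} subspaces rather than nondegenerate ones. For (i), a symplectic form has no nondegenerate subspace of odd dimension; for (ii), the same holds for an orthogonal form in even characteristic. For (iii), when $k=n/2$ and either $G\ne\POm^-(n,q)$ or $4\nmid n$, the paragraph preceding the lemma (using \cite[Table~2.2]{BHR}) shows that $G$ does not act primitively on any set of $n/2$-dimensional nondegenerate subspaces, so primitivity forces $\Omega$ to be a set of totally singular subspaces. Once you know $\Omega$ is totally singular, the improved bound $b(G^\Omega)\le n/k+10$ is stated explicitly in \cite[p.~28]{HLM}. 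No probabilistic argument, no involution, no fallback to \cite{B} or to computation is needed; the lemma is a two-line deduction from \cite{HLM} once this observation is made.
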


\begin{proof}
Theorem 3.3 of \cite{HLM} shows that $b(G^\Omega) \le n/k+11$. If $k = n/2$ and either $G \ne \POm^-(n,q)$ or $4 \nmid n$, then $\Omega$ is a set of totally singular subspaces of $V$ (since $G^\Omega$ is primitive). This is also true if $k$ is odd and either $G = \PSp(n,q)$, or $G = \POm^\pm(n,q)$ with $q$ even, as here no nondegenerate subspace of $V$ has odd dimension. Thus in any of these special cases, $b(G^\Omega) \le n/k+10$ by \cite[p.~28]{HLM}.
\end{proof}

It would be possible to use Lemmas~\ref{l:bs1} and \ref{l:sublarge} directly (along with some additional arguments) to obtain upper bounds for $b_{maxprim}(G)$, at least in the case where $G$ has no primitive action on (degenerate) nonsingular $1$-dimensional subspaces of $V$. However, in most cases, we can deduce significantly improved bounds by considering in detail the known bounds on base sizes of primitive actions of $G$ on subspaces of dimension at most $2$.

By case (b) of Definition~\ref{d:std}, any subspace action of $G$ is defined with respect to the natural module for $G$. Thus $G$ may have standard actions that are not subspace actions of $G$, but are equivalent to subspace actions of isomorphic classical groups. Therefore, in the lemma below, it is not sufficient to consider $G$ only up to isomorphism. For example, the orthogonal groups $G = \POm^\varepsilon(n,q)$ (with $\varepsilon = -$ if $n = 4$ so that $G$ is simple) occur as classical simple groups for any integer $n\geq 3$,  even though these groups are often considered as linear, unitary or symplectic groups of lower dimension when $n \le 6$.

\begin{lemma}
\label{l:lowtab}
Let $G$ be a finite simple classical group that is not isomorphic to an alternating group, and let $V$ be the natural module for $G$. Additionally, let $\Omega$ be a set of $k$-dimensional subspaces of $V$, such that $k \le 2$ and $G^\Omega$ is a primitive subspace action. Then Table~\ref{tab:lowdim} gives an upper bound for $b(G^\Omega)$.
\end{lemma}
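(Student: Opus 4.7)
The plan is to proceed by a case analysis organised by the type of $G$ (linear, unitary, symplectic, orthogonal) and by the pair $(k,\mathrm{type\ of\ subspace})$, citing the detailed base-size literature for subspace actions of classical groups. Since $k\leq 2$, the stabiliser of a subspace is a geometric maximal subgroup in Aschbacher class $\mathcal{C}_1$, and these have been studied very carefully; the main sources I would draw on are Burness's work (e.g.\ \cite{B} and the tables referenced there), together with Halasi--Liebeck--Maróti \cite{HLM} as a backstop via Lemma~\ref{l:sublarge}, which already gives $b(G^\Omega)\leq n/k+11$.

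First I would fix notation: let $n=\dim V$ and split $\Omega$ into the mutually exclusive possibilities that arise from Definition~\ref{d:std}(b)(i), namely (1) totally singular $k$-spaces (including all $k$-spaces when $G=\PSL(V)$), (2) nondegenerate $k$-spaces, and (3) for orthogonal $G$ with $q$ even and $k=1$, nonsingular $1$-spaces. For $k=1$ and $G=\PSL(n,q)$ the base size on the set of points of $\PG(n-1,q)$ is classically known (essentially $n+1-\delta$ for a small correction), and this can be looked up directly; similarly the action on hyperplanes is equivalent (up to a duality) and contributes no new bound. For unitary, symplectic and orthogonal groups acting on singular or nonsingular $1$-spaces, precise values or tight upper bounds have been computed (see in particular the explicit computations in \cite{B} and the references cited there), and I would transcribe these into the rows of Table~\ref{tab:lowdim}.

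Next I would handle $k=2$. Here the two subcases are totally singular $2$-spaces (a single orbit in each type) and nondegenerate $2$-spaces (one or two orbits depending on type and $q$). In each subcase, primitivity restricts the $G$-orbit being considered (Lemma~\ref{l:sublarge} already uses this), and the bound $b(G^\Omega)\leq n/2+11$, refined by the one-less-bound from \cite[p.~28]{HLM} when $k=n/2$ and either $G\ne\POm^-(n,q)$ or $4\nmid n$, immediately covers all but a short list of small-$n$ exceptions. For those exceptions, I would invoke the explicit tables of base sizes for the primitive actions of classical groups of small dimension (e.g.\ those surveyed in \cite{B}, and tabulated in \cite{BHR} for the maximal-subgroup structure).

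The cases I expect to be genuinely awkward are (i) the nonsingular $1$-space action of $\POm^\varepsilon(n,q)$ with $q$ even, where the point stabiliser is of type $\Sp_{n-2}(q)$ and the base size is not simply read off from a formula but must be extracted from the detailed analysis in the literature; (ii) small-dimensional exceptional isomorphisms, since a classical group $G$ in the statement is specified by its isomorphism type but the natural module $V$ depends on the presentation --- so I would fix $V$ to be the natural module corresponding to the given quasisimple cover and carefully distinguish, say, the natural action of $\PSL(2,q)$ from its realisation as $\POm^-(4,q)$; and (iii) a handful of small $n$ and $q$ for which asymptotic bounds are not tight, where I would simply consult GAP or the explicit tables in \cite{B,BOW} to record the sharper bound. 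The hard part is bookkeeping: making sure every orbit of $k$-subspaces with $k\leq 2$ giving a primitive action is accounted for in exactly one row of Table~\ref{tab:lowdim}, with an upper bound at least as strong as the generic bound from Lemma~\ref{l:sublarge}, so that later applications (in the proof of Theorem~\ref{t:bsub}) can simply cite this lemma row by row.
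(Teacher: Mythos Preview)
Your overall plan (case split by type and by $k$, with special handling of small-dimensional isomorphisms) matches the paper's organisation, but there is a genuine gap in the sources you intend to rely on, and as written the plan would not recover the bounds in Table~\ref{tab:lowdim}.

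First, \cite{B} concerns \emph{non-standard} actions of classical groups; it does not give the base sizes of primitive subspace actions on $1$- and $2$-spaces that you need here. (The paper does use \cite{B}, but only at the very end, for the small orthogonal groups $\POm^\varepsilon(n,q)$ with $n\le 6$, where an exceptional isomorphism turns the subspace action into a non-standard action of the isomorphic linear, unitary or symplectic group.) Second, your proposed use of Lemma~\ref{l:sublarge} as a ``backstop'' for $k=2$ cannot work: the bound from \cite{HLM} is $b(G^\Omega)\le n/2+11$, which is strictly larger than every $k=2$ entry in Table~\ref{tab:lowdim} (for instance $\lceil n/2\rceil+2+\delta_{4n}$ in the linear case) for every $n$. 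So it is not true that \cite{HLM} ``immediately covers all but a short list of small-$n$ exceptions''; it covers none of them.

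The reference that actually does the work is \cite{MR} (Moscatiello and Roney-Dougal). Sections 2.1--2.2 of \cite{MR} compute, for each subspace action on $1$- and $2$-spaces, an upper bound on the base size of the full projective isometry group $\PGL(n,q)$, $\PGU(n,q)$, $\PSp(n,q)$ or $\PGO^\varepsilon(n,q)$, and these bounds are precisely the entries in Table~\ref{tab:lowdim}; they descend to $G$ since $b(G^\Omega)\le b(R^\Omega)$ for any overgroup $R$. The paper then handles separately the cases that \cite{MR} does not cover directly: the elementary construction of a base for $\PSL(n,q)$ on $1$-spaces, the identification of the $\PSU(2,q)$ and $\PSp(2,q)$ subspace actions with that of $\PSL(2,q)$, and the low-dimensional orthogonal groups via the isomorphisms you correctly flagged as awkward. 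Your instincts about the bookkeeping difficulties were right; the missing ingredient is simply the correct citation for the main bounds.
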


\begin{table}[ht]
\centering
\renewcommand{\arraystretch}{1.2}
\caption{Upper bounds for the base sizes of primitive subspace actions in Lemma~\ref{l:lowtab}. Here, $\alpha:=1$ if $\varepsilon = -$, and $\alpha:=0$ otherwise. Additionally, $\delta$ is the Kronecker delta.}
\label{tab:lowdim}
\begin{tabular}{ ccc }
\hline
$G$ & $k$ & Upper bound for $b(G^\Omega)$\\
\hline
\hline
$\PSL(n,q)$ & $1$ & $n+1-\delta_{2q}$\\
$\PSL(n,q)$, $n \ge 4$ & $2$ & $\lceil n/2 \rceil + 2 + \delta_{4n}$\\
\hline
$\PSU(n,q)$ & $1$ & $n + \delta_{2n}$\\
$\PSU(n,q)$, $n \ge 4$ & $2$ & $\lceil n/2 \rceil + 3\delta_{4n}+\delta_{5n}+\delta_{6n}$\\
\hline
$\PSp(n,q)$ & $1$ & $n+\delta_{2n}$\\
$\PSp(n,q)$, $n \ge 4$ & $2$ & $\lceil n/2 \rceil + 2\delta_{4n} + \delta_{6n}$\\
\hline
$\POm^\varepsilon(n,q)$, $n \ge 3$ & $1$ & $n-1+\delta_{3n}+\delta_{5n}$\\
$\POm^\varepsilon(n,q)$, $n \ge 4$ & $2$ & $\lceil n/2 \rceil + \delta_{4n} + \delta_{5n} + \alpha \delta_{6n}$\\
\hline
\end{tabular}
\end{table}


\begin{proof}
Let $b(G,k)$ be the value in the final column of Table~\ref{tab:lowdim} corresponding to the pair $(G,k)$. We first consider the case where $G = \PSL(n,q)$ and $k = 1$. Let $\{e_1,\ldots,e_n\}$ be a basis for $V$. It is well known (and easy to verify) that $B:=\{\langle e_1 \rangle, \ldots, \langle e_n \rangle\}$ is a base of minimal size for $G^\Omega$ if $q = 2$, and that $B \cup \{\langle e_1+\cdots+e_n \rangle\}$ is a base of minimal size if $q > 2$. Hence $b(G^\Omega) = b(G,k)$.

Next, if $G \in \{\PSU(2,q),\PSp(2,q)\}$ with $k=1$, then the subspaces in $\Omega$ are $1$-dimensional and totally singular. Every $1$-dimensional subspace of $\PSp(2,q)$ is totally singular, and so the action in this case is precisely the unique primitive subspace action of $\PSL(2,q)$ (which is equal to $\PSp(2,q)$). This action is in fact also equivalent to the unique subspace action of $\PSU(2,q)$ (see \cite[Tables 2.3 \& 8.1]{BHR}). As $G$ is simple, $q \ne 2$ for these groups, and we deduce that $b(G^\Omega) = 3 = b(G,k)$.

Thus we may assume that $n \ge 3$, and that if $k = 1$ then $G$ is not linear. Additionally, if $G$ is orthogonal, we assume here that $n \ge 4$, that $k = 1$ if $n \le 6$, and that $\Omega$ is a set of nondegenerate or nonsingular subspaces if $n = 4$ (we deal with the other orthogonal cases in the next paragraph). The results in \cite[\S2.1--2.2]{MR} show that $b(G,k)$ is an upper bound for the base size of the action on $\Omega$ of the projective isometry group $\PGL(n,q),\PGU(n,q),\PSp(n,q)$ or $\PGO^\varepsilon(n,q)$ associated with and containing $G$. As $b(G^\Omega) \le b(R^\Omega)$ for any overgroup $R$ of $G$, it follows that $b(G^\Omega) \le b(G,k)$.



Finally we deal with the actions of small dimensional orthogonal groups not covered by the previous paragraph, namely the groups $G= \POm^\varepsilon(n,q) \cong H$, where $(n,\varepsilon,H)$ is one of the following: \[
(3,\circ,\PSL(2,q)),(4,-,\PSL(2,q^2)),(5,\circ,\PSp(4,q)),(6,+,\PSL(4,q)),(6,-,\PSU(4,q)).
\] 
We can use the tables in \cite[\S2.2, \S8.2]{BHR} to determine the maximal subgroup type of a point stabiliser of $G^\Omega$, considered as a subgroup of $H$. Using this information, we deduce from \cite[Tables 2--3]{B} that if $G^\Omega$ is not equivalent to a subspace action of $H$, then $b(G^\Omega) \le 3$, and in each case this is at most the relevant bound $b(G,k)$. We also see that if $G^\Omega$ is equivalent to a subspace action of $H$ in the case where $n = 6$ and $k = 2$, then $\varepsilon = -$ and the action of $H$ is on totally singular $1$-dimensional subspaces. In each case, the bounds on base sizes of subspace actions of $H$ from above show that $b(G^\Omega) \le b(G,k)$.
\end{proof}


We can now prove Theorem~\ref{t:bsub}. Since $b_{maxprim}(G)$ is a property of $G$ as an abstract group, we may now consider $G$ up to isomorphism.

\medskip\noindent
{\sc Proof of Theorem~\ref{t:bsub}}\quad
Let $V$ be the natural module for $G$ and $n:=\dim(V)$. We begin by determining a lower bound for $b_{maxprim}(G)$. For each positive integer $k \le n/2$, \cite{F} gives a lower bound for the base size of a primitive action of $G$ on a set of $k$-dimensional subspaces of $V$ (depending only on $G$ and $k$). Additionally, the tables in \cite[\S3.5]{KL} and \cite[\S8.2]{BHR} show that $G$ has a primitive action on a set of $1$-dimensional subspaces of $V$. In each case, the lower bound for $b_{maxprim}(G)$ given in Table~\ref{tab:maxprim} is the lower bound from \cite{F} in the case $k = 1$ (when $G$ is linear, this bound is well known; see the proof of Lemma~\ref{l:lowtab}).

Now, let $b_{maxsub}(G)$ and $b_{maxstand}(G)$ denote the maximum of the base sizes of all primitive subspace actions of $G$, and of all primitive standard actions of $G$, respectively. The latter accounts for all actions of $G$ that are equivalent to subspace actions of isomorphic classical groups. Additionally, let $b_{upper}(G)$ be the upper bound for $b_{maxprim}(G)$ from Table~\ref{tab:maxprim}. We will divide the remainder of the proof into several cases, corresponding to the type of $G$. In each case, we utilise the upper bounds from Lemma~\ref{l:lowtab} for base sizes of primitive subspace actions of $G$ on subspaces of $V$ of dimension at most two.

\medskip

\textbf{Case (a)}: $G \cong \PSL(n,q)$. Any primitive action of $G$ on $k$-dimensional subspaces of $V$, with $3 \le k \le n/2$, has base size at most $n/k+3$ if $k \mid n$, or at most $n/k+5$ otherwise \cite[p.~24]{HLM}. As $\lceil n/2 \rceil +2$ and $\lfloor n/3+5 \rfloor$ are less than or equal to $n$ for all $n \ge 7$, it follows easily from Lemma~\ref{l:lowtab} (noting that $G \not\cong \PSL(4,2) \cong A_8$) that $b_{maxsub}(G) \le n+1-\delta_{2q} = b_{upper}(G)$.

We now consider standard actions of $G$ that are not subspace actions. The relevant isomorphisms here are $\PSL(2,7) \cong \PSL(3,2)$, $\PSL(2,q) = \PSp(2,q) \cong \PSU(2,q) \cong \Om(3,q)$, $\PSL(2,q) \cong \POm^-(4,q^{1/2})$ (when $q$ is a square) and $\PSL(4,q) \cong \POm^+(6,q)$. The tables in \cite[\S2.2, \S8.2]{BHR} show that the action of $\POm^+(6,q)$ on $3$-dimensional totally singular subspaces is equivalent to a subspace action of $\PSL(4,q)$. It follows from Lemma~\ref{l:lowtab} that each subspace action of $\POm^+(6,q)$ or of $\PSL(4,q)$ has a base size of at most $5$, while the remaining actions on subspaces here all have a base size of at most $3$. When $q$ is even, the action of $\PSp(2,q)$ corresponding to case (b)(ii) of Definition~\ref{d:std} also has base size at most $3$ \cite[p.~30]{HLM}. Thus $b_{maxstand}(G) \le b_{upper}(G)$ in each case.

It remains to consider the cases where our upper bound $b_{upper}(G)$ for $b_{maxstand}(G)$ may be strictly less than $b_{maxprim}(G)$. By Lemma~\ref{l:bs1}, this can occur only if $n+1-\delta_{2q} < 4$, i.e., only if $n = 2$ or $G = \PSL(3,2)$. This same lemma now shows that the base size of a non-standard action of $G$ is at most $3 = n+1-\delta_{2q}$. Therefore, in each linear case, $b_{maxprim}(G) \le b_{upper}(G)$.

\medskip

\textbf{Case (b)}: $G \cong \PSU(n,q)$, with $n \ge 3$. As $\lfloor n/3+11 \rfloor \le n$ for all $n \ge 16$, we see from Lemmas~\ref{l:sublarge} and \ref{l:lowtab} that $b_{maxsub}(G) \le b_{upper}(G)$.

The remaining standard actions correspond to the isomorphisms $\PSU(4,q) \cong \POm^-(6,q)$ and $\PSU(4,2) \cong \PSp(4,3) \cong \POm(5,3)$. Lemma~\ref{l:lowtab} shows that $b_{maxstand}(G) \le b_{upper}(G)$ (since $4 \nmid 6$, the group $\POm^-(6,q)$ has no primitive subspace action on three-dimensional subspaces of its natural module). Moreover, if $b_{upper}(G) < 4$, then $b_{maxsub}(G) \le 3 = n$. Hence Lemma~\ref{l:bs1} yields $b_{maxprim}(G) \le b_{upper}(G)$ in each case.

\medskip

\textbf{Case (c)}: $G \cong \PSp(n,q)$, with $n \ge 4$. When $q$ is even, the subspace action of $G$ from case (b)(ii) of Definition~\ref{d:std} has base size $n$ if $n \ge 6$ \cite[Proposition 1]{MR}, or base size at most $5$ if $n = 4$ \cite[p.~30]{HLM}. Since $\lfloor n/3+10 \rfloor$ and $\lfloor n/4+11 \rfloor$ are less than or equal to $n$ for all $n \ge 14$, we obtain $b_{maxsub}(G) \le b_{upper}(G)$ from Lemmas~\ref{l:sublarge} and \ref{l:lowtab}.

We now consider the isomorphisms $\PSp(4,3) \cong \PSU(4,2) \cong \POm^-(6,2)$ and $\PSp(4,q) \cong \POm(5,q)$. In each case, Lemma~\ref{l:lowtab} yields $b_{maxstand}(G) \le b_{upper}(G)$. As no symplectic $G$ satisfies $b_{upper}(G) < 4$, it follows from Lemma~\ref{l:bs1} that $b_{maxprim}(G) \le b_{upper}(G)$.

\textbf{Case (d)}: $G \cong \POm^\varepsilon(n,q)$, with $n \ge 7$. Noting that $\lfloor n/3+11 \rfloor \le n-1$ for all $n \ge 17$ and $\lfloor n/4+11 \rfloor \le \lfloor n/3+10 \rfloor$ for all $n \ge 9$, we see from Lemmas~\ref{l:sublarge} and \ref{l:lowtab} that $b_{maxsub}(G) \le b_{upper}(G)$.

As $n \ge 7$, there are no nontrivial isomorphisms of classical groups to consider. Moreover, $b_{upper}(G) > 3$, and so $b_{maxprim}(G) \le b_{upper}(G)$ by Lemma~\ref{l:bs1}. \qed

\medskip

We can now prove Theorem \ref{t:rest} for classical groups.

\medskip
\noindent\emph{Proof of Theorem \ref{t:rest}(c).}
Let $G$ be a classical group that is not isomorphic to an alternating group. Then Theorem~\ref{t:rest}(c) follows from Corollary \ref{c:redn} and the upper bound for $b_{maxprim}(G)$ given in Theorem~\ref{t:bsub}.
\hfill\qedsymbol

\medskip
Of course, if $G$ is a low-dimensional unitary, symplectic or orthogonal group that is not explicitly mentioned in Theorem~\ref{t:rest}(c), then the general upper bound of $\lfloor n/3+12 \rfloor$ for $k(G)$ from this theorem is often higher than the actual upper bound that follows from Theorem~\ref{t:bsub}. Notice also that the lower and upper bounds for $b_{maxprim}(G)$ from Table~\ref{tab:maxprim} often coincide. The following corollary of Theorem~\ref{t:bsub} summarises when this occurs. Here, $\delta$ again denotes the Kronecker delta, and $\mathcal{O}$ is as in \eqref{e:O}.

\begin{corollary}
\label{c:exactprim}
Let $G$ be a finite simple classical group that is not isomorphic to an alternating group.
\begin{enumerate}
\item[\upshape{(i)}] If $G = \PSL(n,q)$, then $b_{maxprim}(G) = n+1-\delta_{2,q}$.
\item[\upshape{(ii)}] If $G = \PSU(n,q)$ with $(q+1) \nmid n$, and with $n \in \{3,5\}$ or $n \ge 16$, then $b_{maxprim}(G) = n$.
\item[\upshape{(iii)}] If $G = \PSp(n,q)$ with $n \ge 14$, then $b_{maxprim}(G) = n$.
\item[\upshape{(iv)}] If $G = \POm^\varepsilon(n,q)$ with $n \ge 16$, with $q$ even if $n = 16$, and with $G \notin \mathcal{O}$, then $b_{maxprim}(G) = n-1$.
\end{enumerate}
\end{corollary}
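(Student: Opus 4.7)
The plan is to derive Corollary~\ref{c:exactprim} directly from Theorem~\ref{t:bsub} by inspecting Table~\ref{tab:maxprim} case by case and identifying exactly when the lower and upper bounds for $b_{maxprim}(G)$ coincide. Since both bounds are explicit arithmetic expressions in $n$ and $q$ and in the indicator quantities $\alpha,\beta,\gamma$ from \eqref{e:abc}, the whole argument reduces to numerical verification, with no new group-theoretic content.

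First I would dispose of the linear case (i): in Table~\ref{tab:maxprim} the lower and upper bounds for $\PSL(n,q)$ are both equal to $n+1-\delta_{2,q}$, so equality is unconditional, proving (i). For the unitary case (ii), Table~\ref{tab:maxprim} shows that when $n\in\{3,5\}$ or $n\ge 16$ the upper bound is $n$ and the lower bound is $n-\alpha$; these agree precisely when $\alpha=0$, which by \eqref{e:abc} is equivalent to $(q+1)\nmid n$. The symplectic case (iii) is immediate: for $n\ge 14$, Table~\ref{tab:maxprim} gives lower and upper bounds both equal to $n$.

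The orthogonal case (iv) requires slightly more care. For $n\ge 17$ the upper bound is already $n-1$, and the lower bound $n-1-\beta$ matches it iff $\beta=0$, i.e.\ iff $G\notin\mathcal{O}$. For $n=16$ the upper bound is $\lfloor n/3+11-\gamma\rfloor$; when $q$ is even we have $\gamma=1$ by \eqref{e:abc}, so this becomes $\lfloor 16/3+10\rfloor=15=n-1$, and once again equality with the lower bound holds exactly when $\beta=0$. Note that when $n=16$ and $q$ is odd, the upper bound is $\lfloor 16/3+11\rfloor=16>n-1$, which is precisely why the hypothesis of (iv) requires $q$ even in this boundary dimension.

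Thus each of the four items in the corollary is reduced to comparing two entries of Table~\ref{tab:maxprim} and recognising the defining conditions of $\alpha$, $\beta$, $\gamma$ and $\mathcal{O}$. The only technical point is the floor computation at $n=16$ in case (iv); apart from this, there is no real obstacle beyond careful bookkeeping.
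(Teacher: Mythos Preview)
Your proposal is correct and follows exactly the approach the paper intends: the corollary is stated immediately after Theorem~\ref{t:bsub} as a summary of when the lower and upper bounds in Table~\ref{tab:maxprim} coincide, and your case-by-case verification of this (including the floor computation at $n=16$) is precisely what is needed.
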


\begin{remark}
\label{rem:lowb}
{\rm 
Observe that if $G$ is one of the groups in Corollary~\ref{c:exactprim} for which $b_{maxprim}(G)$ is known exactly, then the upper bound for the closure number $k(G)$ from Theorem~\ref{t:rest} is equal to $b_{maxprim}(G)+1$. 
Hence this theorem gives the best upper bound for $k(G)$ that can be obtained solely using Corollary~\ref{c:redn}. On the other hand, for many low-dimensional unitary, symplectic and orthogonal groups $G$, improved upper bounds 
on the base sizes of primitive actions of $G$ on subspaces of dimension greater than two would likely result in improved upper bounds for $b_{maxprim}(G)$ and hence for $k(G)$.
}
\end{remark}


\section{$k$-closures of sporadic groups}

We start with the following lemma on certain simple groups.
 
 \begin{lemma}\label{lem:complete}
Let $G$ be a finite nonabelian simple group acting primitively on a set $\Omega$ such that $\Out(G)=1$ and $G$ is maximal in $\Alt(\Omega)$. If $G$ is $k$-transitive on $\Omega$ but not $(k+1)$-transitive for some integer $k$ such that $1\leqslant k < |\Omega|-2$, then $G^{(k+1),\Omega}=G$  but $G^{(k),\Omega}\neq G$.
 \end{lemma}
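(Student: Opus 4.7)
The plan is to prove the two claims separately. For the easier direction, $G^{(k),\Omega}\neq G$: since $G$ is $k$-transitive on $\Omega$, its orbits on $\Omega^k$ are determined purely by the pattern of coincidences among the coordinates, and these are the same as the orbits of $\Sym(\Omega)$. Hence $G^{(k),\Omega}=\Sym(\Omega)$, and because $k<|\Omega|-2$ forces $|\Omega|\geq 4$ and $G\leq\Alt(\Omega)\lneq\Sym(\Omega)$, we get $G^{(k),\Omega}\neq G$.

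For $G^{(k+1),\Omega}=G$, set $H:=G^{(k+1),\Omega}$. First I would observe that $H$ preserves every $G$-orbit on $(k+1)$-tuples, and therefore also on $k$-tuples (by projecting); combined with the $k$-transitivity of $G$, this gives that $H$ is $k$-transitive but not $(k+1)$-transitive. The key inclusion to exploit is that $\Alt(\Omega)$ is $(|\Omega|-2)$-transitive, hence $(k+1)$-transitive by the hypothesis $k+1\leq|\Omega|-2$. Consequently $\Alt(\Omega)\not\leq H$, so $H\cap\Alt(\Omega)$ is a proper subgroup of $\Alt(\Omega)$ containing $G$, and the maximality of $G$ in $\Alt(\Omega)$ forces $H\cap\Alt(\Omega)=G$, giving $[H:G]\leq 2$.

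The remaining step, which is the main obstacle, is to rule out the case $[H:G]=2$. Suppose for contradiction that it holds. Then $G\triangleleft H$, and since $\Out(G)=1$ and $Z(G)=1$, conjugation gives an embedding $H/C_H(G)\hookrightarrow\Aut(G)=G$, which forces $|C_H(G)|=2$ (using $C_H(G)\cap G=Z(G)=1$) and $H=G\times C_H(G)$. The nonidentity element of $C_H(G)$ would therefore be a nontrivial element of the centralizer $C_{\Sym(\Omega)}(G)$. To rule this out I would invoke the standard identification $C_{\Sym(\Omega)}(G)\cong N_G(G_\omega)/G_\omega$ for a point $\omega\in\Omega$: since $G$ is primitive, $G_\omega$ is maximal in $G$, so either $N_G(G_\omega)=G_\omega$ (forcing the centralizer to be trivial) or $G_\omega\triangleleft G$, which by simplicity forces $G_\omega=1$; but a regular primitive action requires the trivial subgroup to be maximal in $G$, which is impossible for a nonabelian simple group. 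This contradiction yields $H=G$, completing the proof.
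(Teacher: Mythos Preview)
Your proof is correct and follows essentially the same approach as the paper's. Both arguments establish $G^{(k),\Omega}=\Sym(\Omega)$ from $k$-transitivity, then use maximality of $G$ in $\Alt(\Omega)$ together with the $(k+1)$-transitivity of $\Alt(\Omega)$ to get $[G^{(k+1),\Omega}:G]\leq 2$, and finally combine $\Out(G)=1$ with $C_{\Sym(\Omega)}(G)=1$ (which you derive explicitly via $N_G(G_\omega)/G_\omega$, while the paper simply cites it as a consequence of primitivity with $|G|$ not prime) to force equality.
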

 \begin{proof}
 Note that since $G$ is $k$-transitive we have that $G^{(k),\Omega}=\Sym(\Omega)\neq G$.
  Since $G$ is maximal in $\Alt(\Omega)$ it follows that $G^{(k+1),\Omega}\cap \Alt(\Omega)=G$ or $\Alt(\Omega)$. However, $\Alt(\Omega)$ is $(k+1)$-transitive on $\Omega$ while $G$ is not, so $\Alt(\Omega)\not\leqslant G^{(k+1),\Omega}$. Hence   $G^{(k+1),\Omega}\cap \Alt(\Omega)=G$. Thus $|G^{(k+1),\Omega}:G|\leqslant 2$. Since $G$ acts primitively on $\Omega$ (and $|G|$ is not prime) we have that $C_{\Sym(\Omega)}(G)=1$. Then as  $\Out(G)=1$ it follows that $G^{(k+1),\Omega}=G$. 
  \end{proof}

 \begin{corollary}\label{cor:M23on23}
 \begin{enumerate}
     \item[\upshape{(a)}]  Let $G=M_{23}\leqslant \Sym(\Omega)$ with $|\Omega|=23$. Then $G^{(5),\Omega}=G$.
     \item[\upshape{(b)}] Let $G=Co_3\leqslant \Sym(\Omega)$.  with $|\Omega|=276$. Then $G^{(3),\Omega}=G$.
          \item[\upshape{(c)}] Let $G=Co_2\leqslant \Sym(\Omega)$ with $|\Omega|=2300$. Then $G^{(2),\Omega}=G$.
 \end{enumerate}
 \end{corollary}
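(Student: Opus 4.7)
The overarching plan for all three parts is to invoke Lemma \ref{lem:complete}. For each case it suffices to verify: (i) $G$ acts primitively on $\Omega$; (ii) $\Out(G)=1$; (iii) $G$ is maximal in $\Alt(\Omega)$; and (iv) $G$ is $k$-transitive but not $(k+1)$-transitive on $\Omega$ for some $k$ with $1 \le k < |\Omega|-2$. The conditions (i) and (ii) are immediate from the Atlas in each of our three cases, so the substantive work will be in (iii) and (iv).

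For part (a), the natural action of $M_{23}$ on $23$ points is sharply $4$-transitive, so (iv) holds with $k=4$, well within the range $1 \le k < 21$. The maximality of $M_{23}$ in $A_{23}$ is classical and follows from the known list of maximal subgroups of $A_{23}$ in the Atlas. Lemma \ref{lem:complete} with $k=4$ then yields $G^{(5),\Omega}=G$.

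For parts (b) and (c), the primitive representations of $Co_3$ on $276$ points (cosets of $McL{:}2$) and $Co_2$ on $2300$ points (cosets of $U_6(2){:}2$) are each of rank $3$, so each group is transitive but not $2$-transitive on the corresponding $\Omega$. Thus (iv) holds with $k=1$, and Lemma \ref{lem:complete} with $k=1$ gives $G^{(2),\Omega}=G$ in both cases. For part (c) this is precisely the stated conclusion. For part (b), $G^{(2),\Omega}=G$ is strictly stronger than the claim $G^{(3),\Omega}=G$, which then follows from the chain $G \le G^{(3),\Omega} \le G^{(2),\Omega}$ coming from \eqref{eq1}.

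The principal obstacle is the verification of condition (iii) in parts (b) and (c): that $Co_3$ is maximal in $A_{276}$ and $Co_2$ is maximal in $A_{2300}$. This is not as immediate as the analogous statement for $M_{23}$, but it can be extracted from the known lists of primitive permutation groups of degrees $276$ and $2300$. In each case one argues, using the classification of finite simple groups together with the Atlas data on almost simple primitive groups of the relevant degree, that no proper almost simple overgroup of $G$ lies in $A_{|\Omega|}$; combined with the primitivity of $G$ and the simplicity of $A_{|\Omega|}$, this gives the required maximality.
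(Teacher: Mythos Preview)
Your overall strategy matches the paper's: invoke Lemma~\ref{lem:complete} after checking primitivity, $\Out(G)=1$, maximality in $\Alt(\Omega)$, and the transitivity degree. The paper dispatches maximality in one stroke by citing \cite{LPS87}, which is cleaner than your sketch, but that is a matter of presentation.

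There is, however, a genuine factual error in part~(b). The action of $Co_3$ on $276$ points is \emph{not} rank~$3$: it is one of the sporadic doubly transitive permutation groups (point stabiliser $McL{:}2$, two-point stabiliser of order $|McL{:}2|/275$). Since $Co_3$ is $2$-transitive on $\Omega$, it has the same orbits on $\Omega^2$ as $\Sym(\Omega)$, and hence $G^{(2),\Omega}=\Sym(\Omega)\neq G$. Your stronger claim $G^{(2),\Omega}=G$ is therefore false, and your deduction of $G^{(3),\Omega}=G$ from it via \eqref{eq1} collapses. The correct application of Lemma~\ref{lem:complete} uses $k=2$ (since $Co_3$ is $2$-transitive but not $3$-transitive), yielding exactly $G^{(3),\Omega}=G$ and $G^{(2),\Omega}\neq G$; this also explains why the paper states part~(b) with exponent $3$ rather than $2$.

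A minor slip in part~(a): $M_{23}$ is $4$-transitive on $23$ points but not \emph{sharply} so (the four-point stabiliser has order $48$). This does not affect your argument, which only needs $4$-transitivity and failure of $5$-transitivity. Your treatment of part~(c) is correct: $Co_2$ on $2300$ points is indeed rank~$3$, and Lemma~\ref{lem:complete} with $k=1$ gives the stated conclusion.
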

 \begin{proof}
 By \cite{LPS87}, $G$ is maximal in $\Alt(\Omega)$. Additionally, $\Out(G)=1$, and so the result follows from Lemma \ref{lem:complete}.
 %
%
%
 \end{proof}

 When $G=M_{23}$, $Co_2$ or $Co_3$ we have from \cite[Table 1]{BOW} that $b_{maxprim}(G)=6$ so our next result is a slight improvement on what can be deduced from Corollary \ref{c:redn}. We suspect that it is not best possible.

 
 \begin{lemma}\label{lem:M23}
 Let $G=M_{23}$, $Co_2$ or $Co_3$. Then $k(G)\leqslant 6$.
 \end{lemma}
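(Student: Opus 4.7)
The plan is to show that every faithful action of $G$ on a set $\Omega$ satisfies $G^{(6),\Omega}=G$. I will reduce first to the transitive case, and then exploit the bound $b_{maxprim}(G)=6$ from \cite[Table 1]{BOW} together with Corollary~\ref{cor:M23on23}.

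Assume first that $G$ acts faithfully and transitively on $\Omega_0$, and let $\Sigma$ be a maximal $G$-invariant block system on $\Omega_0$ (possibly the partition into singletons), so that the induced action $G^\Sigma$ is primitive and faithful. The key claim is $G^{(6),\Sigma}=G$. If $b(G^\Sigma)\le 5$, this is immediate from Lemma~\ref{l:W}. Otherwise $b(G^\Sigma)=b_{maxprim}(G)=6$, and inspection of \cite[Table 1]{BOW} shows that $G^\Sigma$ must be equivalent to one of the three distinguished primitive actions treated in Corollary~\ref{cor:M23on23}; that corollary then yields $G^{(k),\Sigma}=G$ for some $k\le 5$, and \eqref{eq1} gives $G^{(6),\Sigma}=G$.

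With the claim established, Lemma~\ref{l:kernel} completes the transitive case. Since $b(G^\Sigma)\le 6$ I can choose blocks $B_1,\dots,B_6\in\Sigma$ (repeating entries if necessary) whose pointwise stabiliser in $G$ is trivial, so part~(d) of that lemma shows that $G^{(6),\Omega_0}$ acts faithfully on $\Sigma$, while part~(b) gives $(G^{(6),\Omega_0})^\Sigma\le G^{(6),\Sigma}=G$. Together these force $G^{(6),\Omega_0}=G$.

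Finally, for a general faithful $G$-action on $\Omega$ with orbits $\Delta_1,\dots,\Delta_t$, simplicity of $G$ guarantees that $G$ acts faithfully on each nontrivial orbit, and the transitive case above shows $G^{(6),\Delta_i}=G^{\Delta_i}\cong G$. Lemma~\ref{lem:simpleintrans} then yields $G^{(6),\Omega}=G$, provided that for each pair of orbits $\Delta_i,\Delta_j$ there exists $\beta\in\Delta_i$ with $G_\beta$ intransitive on $\Delta_j$. Verifying this non-transitivity hypothesis is the principal obstacle I anticipate: it amounts, for each of $M_{23}$, $Co_2$ and $Co_3$, to a finite check ruling out pairs of transitive actions whose point stabilisers act transitively on one another's coset spaces. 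Such pairs can be enumerated from the known maximal (and core-free) subgroups of these sporadic groups, and any genuinely awkward pair can be resolved by a direct tuple argument along the lines of the proof of Lemma~\ref{lem:simpleintrans}.
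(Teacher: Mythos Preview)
Your transitive argument is essentially the paper's: both use \cite[Table~1]{BOW} to see that the unique primitive action of $G$ with base size exceeding $5$ is the one of degree $n\in\{23,276,2300\}$, invoke Corollary~\ref{cor:M23on23} for that action, and then combine Lemma~\ref{lem:onblocks}(b),(d) to push the conclusion down to an arbitrary transitive $\Omega_0$.

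The gap is in your intransitive reduction. You propose to apply Lemma~\ref{lem:simpleintrans} to an arbitrary orbit decomposition and defer the non-transitivity hypothesis to an unspecified ``finite check''. But that hypothesis genuinely fails for some orbit pairs: for $G=M_{23}$, take $\Delta_1=G/M_{22}$ (the $23$-point action) and $\Delta_2=G/C_{23}$; since $23\nmid|M_{22}|$ one has $M_{22}\cdot C_{23}=G$, so the point stabiliser $M_{22}$ of $\Delta_1$ acts (regularly) transitively on $\Delta_2$, and likewise $C_{23}$ is transitive on $\Delta_1$. Thus Lemma~\ref{lem:simpleintrans} is not directly applicable here, and your suggested fallback ``direct tuple argument'' is not supplied.

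The paper sidesteps this entirely with one extra observation you are missing: if \emph{any} orbit $\Delta$ admits a maximal block system of size $\ne n$, then $b(G^\Delta)\le 5$ by \cite{BOW} and Lemma~\ref{l:redn}, and since $G$ is faithful on $\Delta$ this already gives $b(G^\Omega)\le 5$ and hence $G^{(6),\Omega}=G$ by Lemma~\ref{l:W}, with no need for Lemma~\ref{lem:simpleintrans}. Only in the residual case where \emph{every} maximal block system on \emph{every} orbit has size $n$ does one invoke Lemma~\ref{lem:simpleintrans}, and there the hypothesis is immediate: each $G_\beta$ lies in a block stabiliser $G_B$ (isomorphic to $M_{22}$, $McL.2$, or $\PSU_6(2).2$ respectively), none of which has order divisible by $23$, while every orbit has size divisible by $n$ and hence by $23$. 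Adding this one-line case split at the start of your intransitive step would close the gap.
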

 \begin{proof}
 Let $n=23$ if $G=M_{23}$, $n=276$ if $G=Co_3$ and $n=2300$ if $G=Co_2$, and let $G$ act faithfully on a set $\Omega$. We will prove that $G^{(6),\Omega}=G^\Omega\cong G$, and from this it will follow that $k(G)\leqslant 6$. If $G$ has an orbit with a maximal block system $\mathcal{B}$ such that $|\mathcal{B}|\ne n$, then $b(G^{\mathcal{B}})\leqslant 5$, by \cite[Theorem 1]{BOW}, and so $b(G^\Omega)\leqslant 5$, by Lemma \ref{l:redn}. Hence by Lemma \ref{l:W}, $G^{(6),\Omega}=G^\Omega\cong G$. Thus it remains to consider the case where on each orbit $\Delta$, each maximal block system has size $n$. Let $\Delta$ be an orbit of $G$ and let $\mathcal{B}$ be a maximal block system on $\Delta$.
 By Lemma \ref{lem:onblocks} and Corollary \ref{cor:M23on23} we have that $G^{(6),\Delta}$ preserves $\mathcal{B}$ and  $(G^{(6),\Delta})^{\mathcal{B}}=G^{(6),\mathcal{B}}=G^\mathcal{B}\cong G$. Moreover, by \cite[Theorem 1]{BOW} there exist $B_1,\ldots,B_6\in\mathcal{B}$ such that $G_{B_1,\ldots,B_6}=1$ (since $G$ is faithful on $\mathcal{B}$), and it follows from Lemma \ref{l:kernel} that $G^{(6),\Delta}$ acts faithfully on $\mathcal{B}$. Thus $G^{(6),\Delta}\cong G$ so $G^{(6),\Delta}=G^\Delta$.
 
 If $\Omega=\Delta$ then we are done so we may assume that $G$ has more than one orbit. Recall that, on each $G$-orbit, each maximal block system has size $n$. Let $\Delta_1$ be a $G$-orbit and let $\mathcal{B}_1$ be a maximal block system for $G$ on $\Delta_1$. Let $\beta\in B\in\mathcal{B}_1$. Then  $G_\beta\leqslant G_B$. When $G=M_{23}$ we have that $G_B=M_{22}$, when $G=Co_3$ we have $G_B=McL.2$ and when $G=Co_3$ we have $G_B=\PSU_6(2).2$. In all cases $|G_B|$ is not divisible by $23$ but $n$ is, and so $G_B$ does not have a transitive action of degree $n$. Thus $G_B$, and hence also $G_{\beta}$ is not transitive on any orbit of $G$. Since $G^{(6),\Delta}=G^\Delta$ for each $G$-orbit $\Delta$, it follows from Lemma \ref{lem:simpleintrans}  that $G^{(6),\Omega}=G^\Omega\cong G$ and the result follows.
 \end{proof}

We use Lemma~\ref{lem:M23} for $M_{23}$, as well as the fundamental Lemma~\ref{lem:obvious}, to determine the closure number of $M_{24}.$

\begin{lemma}\label{lem:M24}
$ k(M_{24})=6$.
\end{lemma}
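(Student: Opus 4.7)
The lower bound $k(M_{24})\ge 6$ follows from Lemma~\ref{lem:complete}: $M_{24}$ is $5$-transitive but not $6$-transitive on $\Omega=\{1,\dots,24\}$, is maximal in $A_{24}$ by \cite{LPS87}, and has trivial outer automorphism group, so the lemma with $k=5$ yields $M_{24}^{(5),\Omega}\ne M_{24}$.

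For the upper bound $k(M_{24})\le 6$, my plan is to show that $G^{(6),\Omega}=G$ for every faithful action of $G:=M_{24}$ on a set $\Omega$, closely following the template of Lemma~\ref{lem:M23}. If some $G$-orbit admits a maximal block system $\mathcal{B}$ with $|\mathcal{B}|\ne 24$, then \cite[Theorem~1]{BOW} (which refines Lemma~\ref{l:bs1}(i) to give $b\le 5$ for every primitive action of $G$ other than the natural $24$-point one) gives $b(G^{\mathcal{B}})\le 5$; then Lemma~\ref{l:redn} yields $b(G^\Omega)\le 5$ and Lemma~\ref{l:W} gives $G^{(6),\Omega}=G$. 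The remaining case, in which every maximal block system of every nontrivial $G$-orbit has size exactly $24$, is where the real work lies.

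The chief obstacle is that the natural $24$-point action has base size $7$, so Lemma~\ref{l:kernel}(d) cannot be invoked at $k=6$ to force faithfulness of the block action. For a nontrivial orbit $\Delta$ with maximal block system $\mathcal{B}$ of size $24$, the quotient $G^{\mathcal{B}}$ is the natural $M_{24}$ action, so Lemma~\ref{lem:complete} gives $G^{(6),\mathcal{B}}=G$, and Lemma~\ref{lem:onblocks} then yields $(G^{(6),\Delta})^{\mathcal{B}}=G$. Let $K$ denote the kernel of this block action; I need to show $K=1$. If $|B|=1$ then $\Delta=\mathcal{B}$ and we are done, so assume $|B|>1$; then $G_B\cong M_{23}$ acts faithfully on $B$, and Lemma~\ref{l:kernel}(c) combined with Lemma~\ref{lem:M23} yields $K|_B\le M_{23}$. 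Since $K\trianglelefteq G^{(6),\Delta}$, the restriction $K|_B$ is normal in $(G^{(6),\Delta})_B^B=M_{23}$, forcing $K|_B\in\{1,M_{23}\}$, uniformly in $B$ by $G$-transitivity on $\mathcal{B}$. To rule out $K|_B\cong M_{23}$, I pick six distinct blocks $B_1,\dots,B_6$, points $\alpha_i\in B_i$, and $h\in K$ with $h|_{B_1}=\eta$ a freely prescribed element of $M_{23}$. By Lemma~\ref{lem:obvious} there exists $x\in G$ with $\alpha_i^x=\alpha_i^{h|_{B_i}}$ for each $i$; since $\alpha_i,\alpha_i^x\in B_i$, this forces $x\in G_{B_1,\dots,B_6}$, a subgroup whose order divides $48$ (the order of the stabiliser of any five points in the natural $24$-point action of $M_{24}$). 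As $\eta$ varies over $M_{23}$, the image $\alpha_1^\eta$ sweeps out all of $B_1$, so $(G_{B_1,\dots,B_6})^{B_1}$ must act transitively on $B_1$. However, the nontrivial faithful transitive degrees of $M_{23}$ are $23$ and integers at least $253$ (no proper subgroup of $M_{23}$ has index strictly between $23$ and $253$): the case $|B|\ge 253$ contradicts $|B|\le 48$, while $|B|=23$ would require an element of order $23$ in $G_{B_1,\dots,B_6}$, contradicting the fact that every element of order $23$ in $M_{24}$, acting on the $24$ blocks, fixes exactly one block and cyclically permutes the other $23$. Hence $K=1$ and $G^{(6),\Delta}=G$.

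To combine these per-orbit equalities into $G^{(6),\Omega}=G$, I would apply Lemma~\ref{lem:simpleintrans}: for any two $G$-orbits $\Delta_i,\Delta_j$ in this remaining case, I choose $\beta\in\Delta_i$ and $\gamma\in\Delta_j$ whose stabilisers lie in the same conjugate of $M_{23}$ in $G$---possible because the $24$ blocks in the maximal block system on each orbit correspond bijectively (via their setwise stabilisers) to the $24$ conjugates of $M_{23}$ in $G$. Then $G_\beta G_\gamma$ is contained in that conjugate of $M_{23}$, hence is a proper subset of $G$, so $G_\beta$ is not transitive on $\Delta_j$. Lemma~\ref{lem:simpleintrans} then gives $G^{(6),\Omega}=G$, completing the proof that $k(M_{24})=6$.
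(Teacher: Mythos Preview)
Your proof is correct and follows the same overall architecture as the paper's: the lower bound via Lemma~\ref{lem:complete}, the reduction via base size when some maximal block system has size $\ne 24$, the per-orbit analysis when every maximal block system has size $24$, and the gluing via Lemma~\ref{lem:simpleintrans}.

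The one genuine point of departure is your argument that the kernel $K$ of the block action is trivial. The paper invokes Scott's lemma to decompose $K$ as a direct product of diagonal copies of $M_{23}$ supported on a partition $\{\mathcal{O}_1,\ldots,\mathcal{O}_\ell\}$ of $\mathcal{B}$, and then treats the cases $\ell\ge 2$ and $\ell=1$ by separate contradictions (the first via a $5{+}1$ tuple across two blocks, the second by producing an index-$24$ subgroup of $M_{23}$). You instead observe directly that $K^B\trianglelefteq (G^{(6),\Delta})_B^B=M_{23}$ forces $K^B\in\{1,M_{23}\}$ uniformly, and then rule out $K^B=M_{23}$ with a single $6$-tuple spread across six distinct blocks: the resulting $x\in G_{B_1,\ldots,B_6}$ lies in a group of order dividing $48$ yet must act transitively on $B_1$, which is impossible since the faithful transitive degrees of $M_{23}$ are $23$ and $\ge 253$. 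This unifies the paper's two subcases into one argument and avoids Scott's lemma entirely; the trade-off is that the paper's $\ell=1$ case gives a slightly more structural contradiction (no index-$24$ subgroup of $M_{23}$), while yours is purely arithmetic. Your handling of the intransitive case is also marginally more uniform, applying Lemma~\ref{lem:simpleintrans} directly to all orbit pairs rather than separating off the ``all orbits of size $24$'' case via Corollary~\ref{cor:alltransequiv}.
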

\begin{proof}
Let $G=M_{24}$ act faithfully on a set $\Omega$. We divide the orbits $\Delta$ of $G$ into three types:
\begin{enumerate}
\item[\upshape{(a)}] $|\Delta|=24$.
\item[\upshape{(b)}] $G$ has a maximal block system on $\Delta$ of size other than 24.
\item[\upshape{(c)}] Each maximal block system of $G$ on $\Delta$ has size 24 and at least one of these maximal block systems is not the partition into singletons. 
\end{enumerate}
Recall that we allow a maximal block system to be the partition into singletons.

Let $\Delta$ be an orbit of $G$ and suppose first that $\Delta$ is of type (a).
Then $G$ acts 5-transitively but not 6-transitively on $\Delta$. By \cite{LPS87} $G$ is a maximal subgroup of $A_{24}$ and so by Lemma \ref{lem:complete}, $k(G) \ge 6$ and $G^{(6),\Delta}=G^\Delta \cong G$. 

Next suppose that $\Delta$ has type (b) and let $\mathcal{B}$ be a maximal block system  on $\Delta$ with $|\mathcal{B}|\ne 24$. Then by \cite[Theorem 1]{BOW} the base size $b(G^{\mathcal{B}})\leqslant 4$. Hence by Lemma \ref{l:redn}, also $b(G^\Delta)\leqslant 4$ and so by Lemma \ref{l:W}, $G^{(5),\Delta}=G^\Delta\cong G$. This implies that $G^{(6),\Delta}=G^\Delta\cong G$.

Finally suppose that $\Delta$ is of type (c) and let  $\mathcal{B}$ be a maximal block system of size 24 that is not the partition into singletons. Let $B \in \mathcal{B}$. By Lemma \ref{lem:onblocks} and our result on orbits of type (a), we have $M_{24}\cong G^{\mathcal{B}}\leqslant (G^{(6),\Delta})^{\mathcal{B}}\leqslant G^{(6),\mathcal{B}}\cong M_{24}$, and so $(G^{(6),\Delta})^{\mathcal{B}}\cong G$. Moreover, by Lemma \ref{lem:GBB}, $M_{23}\cong (G^\Delta)_B^B\leq (G^{(6),\Delta})_B^B\leqslant (G_B)^{(6),B}=M_{23}^{(6),B}$, which is isomorphic to $ M_{23}$ by Lemma \ref{lem:M23}. Thus $G^{(6),\Delta}\leqslant M_{23}\wr M_{24}$ acting imprimitively. Furthermore, since the minimal degree of a transitive representation of  $M_{23}$ is 23 (see for example \cite[p71]{atlas}) we have  $|B|\geqslant 23$. Let $K$ be the kernel of the action of $G^{(6),\Delta}$ on $\mathcal{B}$ and suppose that $K\neq 1$. Then by a lemma of Scott (\cite[p.\,328]{scott} or see \cite[Theorem 4.16]{PS}) the group $K$ is a direct product of $\ell$ diagonal subgroups $H_j\cong M_{23}$ for $j=1,\ldots, \ell$ (where $\ell\geq 1$), with pairwise disjoint supports, that is to say, there is a partition $\{\mathcal{O}_1,\ldots,\mathcal{O}_\ell\}$ of $\mathcal{B}$ such that, for each $j$, $(H_j)^B\cong M_{23}$ for all $B\in\mathcal{O}_j$ and $(H_j)^B=1$ for all $B\notin\mathcal{O}_j$. Suppose that $\ell\geq2$. Then there exist distinct $B_j,B_k\in \mathcal{B}$ such that for all $\alpha_1,\ldots,\alpha_5\in B_j$ and all $\beta_1,\beta_2\in B_k$, there exists $h\in K$ such that $(\alpha_1,\ldots,\alpha_5)^h=(\alpha_1,\ldots,\alpha_5)$ and $\beta_1^h=\beta_2$. 
 Since $K\leqslant G^{(6),\Delta}$, there exists $g\in G$ such that $(\alpha_1,\ldots,\alpha_5,\beta_1)^g=(\alpha_1,\ldots,\alpha_5, \beta_1)^h=(\alpha_1,\ldots,\alpha_5,\beta_2)$. As $\beta_1, \beta_2$ were arbitrary points of $B_k$, it follows that $G_{\alpha_1,\ldots,\alpha_5}$ acts transitively on $B_k$. Note that $G_{B_j}=M_{23}$, so $G_{B_j}$ acts faithfully on $B_j$. Now $G_{\alpha_1,\ldots,\alpha_5}= (G_{B_j})_{\alpha_1,\ldots,\alpha_5}= (M_{23})_{\alpha_1,\ldots,\alpha_5}$ while $|B_k|\geqslant 23$. Suppose first that $G_{B_j}$ has a maximal block system 
 of size greater than 23 on $B_j$. Then by \cite[Theorem 1]{BOW} and Lemma \ref{l:redn}, we have $b(G_{B_j}^{B_j})\leq 4$. Hence we could have chosen $\alpha_1,\ldots,\alpha_5$ such that $(M_{23})_{\alpha_1,\ldots,\alpha_5}=1$, contradicting $G_{\alpha_1,\ldots,\alpha_5}$ acting transitively on $B_k$.  Hence all maximal block systems $\Sigma$ of $G_{B_j}$ on $B_j$ have size 23.  However, in this case
we can choose $\alpha_1,\ldots, \alpha_5\in B_j$ from pairwise distinct blocks of $\Sigma$ such that $|(M_{23})_{\alpha_1,\ldots,\alpha_5}|<23$, again contradicting $G_{\alpha_1,\ldots,\alpha_5}$ acting transitively on $B_k$.

Thus $\ell=1$ so $K\cong M_{23}$, and $G^{(6),\Delta}=M_{23}\times M_{24}$. Hence $G^{(6),\Delta}$ has an intransitive normal subgroup $N$ isomorphic to $M_{24}$ such that the normal subgroup $L\cong M_{23}$ acts transitively on the set $\Sigma$ of $N$-orbits. Now $\Sigma$ is a $G$-invariant partition of $\Delta$, and a maximal block system for the $G$-action on $\Sigma$ corresponds to a maximal block system, say $\mathcal{B}'$, for $G^\Delta$. By assumption all maximal block systems for $G$ on $\Delta$ have size 24 and so $|\mathcal{B}'|=24$. Hence the $G$-action on $\Sigma$ has a block system of size $24$. Thus the subgroup $L$ of $G$ is transitive on $\Sigma$ preserving a block system of size $24$.  This implies that $L\cong M_{23}$ has a subgroup of index $24$, which is a contradiction (see for example \cite[p71]{atlas}). 
Therefore $K=1$ and  $G^{(6),\Delta}=M_{24}$. 

Thus we have shown that $G^{(6),\Delta}=G^\Delta\cong G$ for each orbit $\Delta$.
In particular, if $G$ is transitive on $\Omega$ then we have shown that  $G^{(6),\Omega}=G^{\Omega}\cong G$.  Suppose now that $G$ is intransitive on $\Omega$.  If there is some orbit $\Delta$ of type (b) then we saw above that $b(G^\Delta)\leqslant 4$. Since $G$ acts faithfully on $\Delta$ we have $b(G^{\Omega})\leqslant 4$ and so by Lemma \ref{l:W}, $G^{(5),\Omega}=G^\Omega\cong G$.  If all orbits are of type (a) then all actions of $G$ on orbits are pairwise equivalent and so by Corollary \ref{cor:alltransequiv} we have $G^{(6),\Omega}=G^\Omega\cong G$. 

It remains to consider the case where there is at least one orbit $\Delta$ of type (c) and all the remaining orbits are of type (a) or (c). Recall that this implies that $G^\Delta$ preserves a block system $\mathcal{B}$ of size $24$ on $\Delta$, and moreover in each $G$-orbit $\Delta'$, each maximal block system for $G$ on $\Delta'$ has size $24$. Let $\beta\in B\in\mathcal{B}$. Then $G_\beta\leqslant G_B= M_{23}$. Since $M_{23}$ does not have any transitive actions of degree 24, it follows that $G_B$, and hence also $G_{\beta}$, is not transitive on any orbit of $G$. Thus by Lemma \ref{lem:simpleintrans},  and the fact that $G^{(6),\Delta'}=M_{24}$ for all $G$-orbits $\Delta'$, we deduce that $G^{(6),\Omega} \cong G$. Thus we have shown that $k(G)\leqslant 6$. Moreover,  equality holds since if $|\Omega|=24$ then $G$ is 5-transitive on $\Omega$ and hence $G^{(5),\Omega}=\Sym(\Omega)$.
\end{proof}

We can now complete the proof of Theorem \ref{t:rest}.

\medskip
\noindent\emph{Proof of Theorem \ref{t:rest}(a).}
 Let $G$ be a sporadic simple group. If $G\cong M_{24}$ then by Lemma \ref{lem:M24} we have $k(G)=6$, so suppose further that $G\not\cong M_{24}$. By Corollary~\ref{c:redn}, the closure number $k(G)$ satisfies $k(G)\leq b_{maxprim}(G)+1$, where $b_{maxprim}(G)$ is as in \eqref{e:bmaxprim}.  Let $\Omega$ be a finite set on which $G$ acts faithfully and primitively such that $b(G^\Omega) = b_{maxprim}(G)$, so $k(G)\leq b(G^\Omega)+1$. By \cite[Theorem 1]{BOW} we in fact have that either $b(G^{\Omega})\leqslant 5$ and so $k(G)\leqslant 6$, or $G=M_{23}$, $Co_2$ or $Co_3$. Thus Theorem \ref{t:rest}(a) follows from Lemma \ref{lem:M23}. 
\hfill\qedsymbol

\end{document}